\newcommand{\be}{\begin{equation}}
\newcommand{\ee}{\end{equation}}
\newcommand{\ben}{\begin{eqnarray*}}
\newcommand{\een}{\end{eqnarray*}}
\newtheorem{theorem}{Theorem}[section]
\newtheorem{corollary}[theorem]{Corollary}
\newtheorem{lemma}[theorem]{Lemma}
\definecolor{darkgreen}{rgb}{0.09, 0.45, 0.27}
\definecolor{debianred}{rgb}{0.84, 0.04, 0.33}
\begin{document}
\title{\vskip-0.3in Fujita type results for quasilinear parabolic inequalities with nonlocal terms}

\author{Roberta Filippucci\footnote{Dipartimento di Matematica e Informatica, Universit\'a degli Studi di Perugia, Via Vanvitelli 1, 06123 Perugia, Italy; {\tt roberta.filippucci@unipg.it}}
      $\quad$   and    $\quad$
{Marius Ghergu\footnote{School of Mathematics and Statistics,
    University College Dublin, Belfield, Dublin 4, Ireland; {\tt
      marius.ghergu@ucd.ie}}\;\,\footnote{Institute of Mathematics Simion Stoilow of the Romanian Academy, 21 Calea Grivitei St., 010702 Bucharest, Romania}}
}


\maketitle

\begin{abstract} In this paper we investigate  the nonexistence of nonnegative solutions  of parabolic inequalities of the form
$$\begin{cases}
&u_t \pm L_\mathcal A u\geq  (K\ast u^p)u^q \quad\mbox{ in } \mathbb R^N \times \mathbb (0,\infty),\, N\geq 1,\\
&u(x,0) = u_0(x)\ge0 \,\, \text{ in } \mathbb R^N,\end{cases}
\qquad (P^{\pm})
$$
where  $u_0\in L^1_{loc}({\mathbb R}^N)$, $L_{\mathcal{A}}$  denotes a  weakly
 $m$-coercive operator, which includes  as prototype  the $m$-Laplacian or the generalized mean curvature operator, $p,\,q>0$, while $K\ast u^p$ stands for  the standard convolution operator between a weight $K>0$ satisfying suitable conditions at infinity and $u^p$. 
For problem $(P^-)$ we obtain a Fujita type exponent while for $(P^+)$ we show that no such critical exponent exists.  Our approach relies on  nonlinear capacity estimates adapted to the nonlocal setting of our problems. No comparison results or maximum principles are required.
\end{abstract}

\noindent{\bf Keywords:} Quasilinear parabolic inequalities; nonlocal terms; Fujita exponent.

\medskip

\noindent{\bf 2010 AMS MSC:} 35K59,  35A23, 35B33, 35B53 


\section{Introduction and the main results}\label{sec1}
In this paper we deal with  the nonexistence of nonnegative solutions of the following parabolic  problems
\begin{equation}\label{main}
\begin{cases}
&u_t-L_\mathcal A u\geq  (K\ast u^p)u^q \quad\mbox{ in } \mathbb R^N \times \mathbb (0,\infty),\, N\geq 1\\
&u(x,0) = u_0(x)\ge0 \,\, \text{ in } \mathbb R^N, 
\end{cases}
\end{equation}
and
\begin{equation}\label{main2}
\begin{cases}
&u_t+L_\mathcal A u\geq  (K\ast u^p)u^q \quad\mbox{ in } \mathbb R^N \times \mathbb (0,\infty),\\
&u(x,0) = u_0(x)\ge0 \,\, \text{ in } \mathbb R^N, N\geq 1,\end{cases}
\end{equation}
where $u_0\in L^1_{loc}({\mathbb R}^N)$, $u_0\geq 0$  and the differential operator $L_{\mathcal{A}}u= \text{div} \mathcal{A}(x,u,\nabla u)$ is such that the mapping  $\mathcal{A}: \mathbb R^N \times \mathbb{R}\times \mathbb{R}^N \to \mathbb{R}^N$ is a Carath\'{e}odory function with
\begin{equation}\label{A_cal}\mathcal{A}(x,z,0) = 0,\qquad  \mathcal{A}(x,z,\xi)\cdot \xi\geq 0\end{equation} 
for every $(x,z,\xi) \in \mathbb{R}^N\times [0, \infty)\times\mathbb{R}^N$. In addition, $L_{\mathcal{A}}$  is assumed to be  a weakly $m$-coercive operator, that is,  there exist  a  constant $c_0> 0$ and an exponent $m>1$
such that the inequality \begin{equation} \label{weak_m}
 \mathcal A(x,z,\xi)\cdot \xi\ge c_0 |\mathcal A(x,z,\xi)|^{m'}
\end{equation}
holds for all $(x,z,\xi) \in \mathbb{R}^N\times [0, \infty)\times\mathbb{R}^N$ with $m'= m /(m-1)$.

The main prototypes for $\mathcal{A}$ are:
\begin{itemize}
\item the $m$-Laplace operator given by $\mathcal{A}(x,z,\xi)=|\xi|^{m-2}\xi$, $m>1$;
\item the $m$-mean curvature operator, or the generalized mean curvature operator, given by
$$
\mathcal{A}(x, z, \xi)= \frac{|\xi|^{m-2}}{\sqrt{1+|\xi|^m}}\xi.
$$
\end{itemize}
Furthermore, on the right-hand side  of \eqref{main} and \eqref{main2} we assume the exponents $p,q>0$. The function  $K\in C(\mathbb{R}^N\setminus\{0\})$, $K>0$ satisfies $\liminf_{x\to 0}K(x)>0$  and there exists $\rho>0$ and $0<\beta<m/2$ such that
\begin{equation}\label{la}
K(x)\geq c|x|^{-\beta} \quad \mbox{ for all }\; x\in \mathbb{R}^N, \, |x|>\rho,
\end{equation}
where $c>0$ is a positive constant. Also, by $K\ast u^p$ we denote the standard convolution operator defined by
$$
(K\ast u^p)(x,t)=\int_{\mathbb R^N}K(x-y)u^p(y,t) dy\quad\mbox{ for all }(x,t)\in\mathbb R^N\times (0, \infty).
$$
We are interested in  {\it nonnegative weak solutions} of \eqref{main},  that is nonnegative functions $u(x,t)$, belonging to the class $\mathcal S$ given by those  $u\in  {W}^{1,m}_{\rm loc}(\mathbb R^N\times (0, \infty))$ which fulfil the two conditions below
\begin{enumerate}
\item[(i)] ${\mathcal{A}(x,u,\nabla u)  \in  [{L}^{m'}_{\rm loc}(\mathbb R^N\times (0, \infty))]^N}$,
\item[(ii)] $(K\ast u^p)u^q  \in  {L}^1_{\rm loc}(\mathbb R^N\times (0, \infty))$,
\end{enumerate}
and such that  for any nonnegative test function  $\varphi \in C^{1}_{c}(\mathbb R^N\times \mathbb{R})$, we have
\begin{equation} \label{weak_form2}
\int_0^\infty\!\int_{\mathbb R^N} (K\ast u^p)u^q  \varphi \, dx \, dt \leq  \int_0^\infty\!\int_{\mathbb R^N} u_t \; \varphi \, dx \, dt
 + \int_0^\infty\!\int_{\mathbb R^N} \mathcal{A}(x,u,\nabla u) \cdot \nabla \varphi \, dx \, dt,
\end{equation}
or equivalently
\begin{equation} \label{weak_form1}
\begin{aligned}
\int_0^\infty\!\!\int_{\mathbb R^N} (K\ast u^p)u^q \varphi \, dx \, dt \leq & - \int_{{\mathbb{R}^N}} \, u_0(x) \varphi(x,0) \, dx - \int_0^\infty\!\!\int_{\mathbb R^N} u \, \varphi_{t} \, dx \, dt \\
& + \int_0^\infty\!\!\int_{\mathbb R^N} \mathcal{A}(x,u,\nabla u) \nabla\varphi \, dx \, dt. \end{aligned}
\end{equation}

For {\it nonnegative weak solutions} of \eqref{main2} the functions $u$ in class $\mathcal{S}$ satisfy (i)-(ii) above and we replace \eqref{weak_form2} by 
\begin{equation} \label{weak_form3}
\int_0^\infty\!\int_{\mathbb R^N} (K\ast u^p)u^q  \varphi \, dx \, dt \leq  \int_0^\infty\!\int_{\mathbb R^N} u_t \; \varphi \, dx \, dt
 - \int_0^\infty\!\int_{\mathbb R^N} \mathcal{A}(x,u,\nabla u) \cdot \nabla \varphi \, dx \, dt.
\end{equation}

The pioneering paper in this subject is  due to Fujita \cite{Fuj} in $1966$, where he investigated the fol\-lo\-wing Cauchy problem for the semilinear heat equation
\begin{equation}\label{fujita66}
\begin{cases}
&u_t - \Delta u = u^{q} \,\,\, \text{in }\mathbb{R}^{N} \times (0, \infty),\\
&u(x,0) = u_0(x) \,\, \text{in } \mathbb{R}^{N}.\\
\end{cases}
\end{equation}
Fujita obtained the critical exponent $ q_{F} = 1 + 2/N$ on the existence versus nonexistence of nonnegative nontrivial solutions. Precisely,  nonexistence of solutions, that is blow up, holds when $1<q< 1 + 2/N$ and $u_0$ is bounded and nonnegative, while blow up can occur when
 $q> 1 + 2/N$ depending on the size of $u_0$. Since then, there have been a number of extensions of Fujita results in many directions.
In particular, the result obtained by Fujita was completed, relatively to the critical case, in \cite{Hay} for $N=1,2$ and in \cite{KST} for $N\geq 3$. 

When one moves from the equality to the inequality case, in contrast to elliptic problems, the critical exponents for parabolic equations and inequalities coincide,  at least in the semilinear case.  This observation  is due to Mitidieri and Pokhozev in \cite[Part II]{MPbook} where a powerful tool on a priori estimates is devised. Specifically, 
the technique of the nonlinear capacity developed by the authors in \cite{MPbook} is based on the derivation of suitable a priori bounds on the solution of the problem under consideration by carefully choosing special test functions and scaling argument (other types of a priori estimates for quasilinear parabolic problems may be found in \cite{LS2009, LS2013, LSS2011}). This fact enables one to analyze by the same method nonexistence of solutions, not necessarily nonnegative, of essentially different problems, namely noncoercive parabolic problems 
$$u_t-\Delta u\ge |u|^q \quad\text{in }\mathbb{R}^{N} \times (0, \infty),$$
and coercive parabolic problems respectively given by
\begin{equation}\label{coer_mod}u_t+\Delta u\ge |u|^q \quad\text{in }\mathbb{R}^{N} \times (0, \infty).
\end{equation}
As noted in \cite[Remark 3.8]{MP_JEv}, it is possible to show that in this case the problem \eqref{coer_mod} has no non negative weak solutions for every $q > 1$.

We also refer the reader to \cite[Section 18]{QS} where the authors propose a modification of the above arguments, based on rescaling of test functions.

Later on, several extensions to quasilinear problems were developed in literature: see e.g., the book by
Samarskii, Galaktionov, Kurdyumov and Mikhailov \cite{SGKM} and the work by Levine, Lieberman and Meier \cite{LLM} on parabolic problems involving the mean curvature operator.

For Fujita type results for degenerate parabolic inequalities and for systems of quasilinear parabolic inequalities  we refer  the reader to \cite{MP_JEv} and to \cite{MP}. In particular, the Fujita exponent for the $m$-Laplacian parabolic case
\begin{equation}\label{fujita661}
\begin{cases}
&u_t - \Delta _mu \ge u^{q} \,\,\, \text{in }\mathbb{R}^{N} \times (0, \infty),\\
&u(x,0) = u_0(x)\ge0 \,\, \text{in } \mathbb{R}^{N},\\
\end{cases}
\end{equation}
is obtained in  \cite{MP}  where it is proved that \eqref{fujita661} has non nonnegative nontrivial solutions for
$$\max\{1,m-1\}<q\le m-1+\frac mN.$$ This last condition forces $m>2N/(N+1)$. 
For further generalizations we refer the reader to 
\cite{zhou} for inhomogeneous pseudoparabolic equations, to \cite{KK} for  parabolic problems in halfspace, or to \cite{FL} for parabolic inequalities with weights and  nonlinearites depending on the gradient.

Nonlocal models describe many natural phenomena, such as the non-Newton flux in the mechanics of fluid, population of biological species and filtration.
Concerning a nonlocal sources problem, Galaktionov and Levine \cite{GL} investigated positive solutions of a Cauchy problem for the following semilinear parabolic equation with weighted nonlocal sources
\begin{equation}\label{Gal_Lev} 
\begin{cases}
&u_t=\Delta u^m +\biggl( \int_{\mathbb R^N}K(x)u^p(x,t)dx\biggr)^{(r-1)/p}u^q\,\,\, \text{in }\mathbb{R}^{N} \times (0, \infty),\\
&u(x,0) = u_0(x)\ge0 \,\, \text{in } \mathbb{R}^{N},\\
\end{cases}
\end{equation}
where $p, q, r\ge1$, $m>1$ and $K$ is a  function not necessarily in $L^1(\mathbb R^N)$. Other semilinear parabolic problems with nonlocal terms are discussed in \cite[Part V]{QS}.

This paper is a first attempt in solving quasilinear parabolic problems of type \eqref{main} and \eqref{main2} which feature  nonlocal terms defined by the convolution operation. Such convolution terms are used to model various quantities in gravity and quantum physics. For instance, the equation
\begin{equation}\label{hartree}
i\psi_t-\Delta \psi=(|x|^{\alpha-N}\ast \psi^2)\psi\quad\mbox{in }{\mathbb R}^N, \alpha\in (0, N), N\geq 1,  
\end{equation}
was introduced in 1928 by Hartree \cite{H1,H2,H3}, shortly after the publication of the Schr\"odinger equation, in order to study the  non-relativistic atoms, using the concept of self-consistency. The stationary case of \eqref{hartree} for $N = 3$ and $\alpha = 2$ is known in the literature
as the {\it Choquard} (or {\it Choquard-Pekar}) equation and was introduced in \cite{P} as a model in quantum theory (see also \cite{MS} for a mathematical account on this equation). Stationary singular solutions of the Choquard inequality are discussed in \cite{GT} while quasilinear elliptic equations and inequalities with convolution terms are considered in \cite{FG, GKS2020, GKS2021, GMM2021}.

Our main result concerning \eqref{main} is the following.

\begin{theorem}\label{thmain}
Assume
\begin{equation}\label{cond_beta_m} 0<\beta<m/2\qquad\mbox{ and } \qquad m >\frac{2N+1}{N+1}\bigl(\in (1,2)\bigr).\end{equation}
If
\begin{equation}\label{cond1}
2\max\{1,m-1\}<p+q\le  m-1+\frac{N-\beta+m}{N+\beta} ,
\end{equation}
then problem \eqref{main} does not have nonnegative nontrivial solutions  $u\in \mathcal{S}$.
\end{theorem}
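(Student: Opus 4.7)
The plan is to apply the nonlinear capacity method of Mitidieri and Pokhozev, suitably adapted to the nonlocal convolution term. I argue by contradiction, assuming $u \in \mathcal{S}$ is a nontrivial nonnegative solution of \eqref{main}. For each large $R > 0$, I test the weak inequality \eqref{weak_form1} with a cutoff of the form $\varphi_R(x, t) = \phi_1(|x|/R)^\lambda \phi_2(t/R^\sigma)^\lambda$, where $\phi_1, \phi_2$ are smooth nonincreasing functions equal to $1$ on $[0, 1/2]$ and vanishing outside $[0, 1]$, $\lambda \gg 1$ is large enough to support the Young splits below, and $\sigma > 0$ is a parabolic scaling parameter to be fixed at the end. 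Since $u_0 \geq 0$ and $\varphi_R \geq 0$, the contribution $-\int u_0 \varphi_R(\cdot, 0)\, dx$ is nonpositive and is discarded.

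For the left-hand side I exploit the decay assumption \eqref{la} on $K$ together with $\liminf_{x \to 0} K(x) > 0$: when $R$ is large, $K(x - y) \geq c R^{-\beta}$ for all $|x|, |y| \leq R$, so that $(K * u^p)(x, t) \geq c R^{-\beta} \int_{B_R} u^p(y, t)\, dy$ for $(x, t) \in B_R \times (0, R^\sigma)$. Two successive applications of the Cauchy--Schwarz inequality, first to the product of spatial integrals $\int u^p \cdot \int u^q \varphi_R$, then to the resulting time integral, yield the key lower bound
\[
\iint (K * u^p)\, u^q\, \varphi_R \, dx\, dt \;\geq\; c\, R^{-(\beta + \sigma)}\, I_R^{\,2},
\qquad
I_R := \iint u^{(p+q)/2}\, \varphi_R^{1/2}\, dx\, dt.
\]
The squared $I_R$ and the negative power of $R$ reflect the intrinsically nonlocal nature of the problem and explain the appearance of $2\beta$ and the factor $\tfrac12$ in the structural assumption $0 < \beta < m/2$.

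On the right-hand side, the term $\iint u\, |(\varphi_R)_t|$ is estimated by H\"older's inequality with exponent $(p+q)/2$ (admissible since $p + q > 2\max\{1, m-1\} \geq 2$), yielding an upper bound of the form $C\, I_R^{\,2/(p+q)}\, R^{a_1}$ for a computable exponent $a_1 = a_1(N, \sigma, p, q)$. For the term involving $\mathcal{A}$, the weak $m$-coercivity \eqref{weak_m} furnishes the pointwise estimate $|\mathcal{A}(x, u, \nabla u)| \leq C\, |\nabla u|^{m-1}$; combining H\"older's inequality with a careful Young-type splitting of the cutoff derivative $\phi_1^{\lambda-1}\, \nabla \phi_1$ into two matching pieces, in the spirit of Mitidieri--Pokhozev, leads to a bound of the form $C\, I_R^{\,2(m-1)/(p+q)}\, R^{a_2}$, with $a_2$ depending on $N, \sigma, m, p, q, \beta$. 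The restrictions $\beta < m/2$ and $m > (2N+1)/(N+1)$ enter precisely as the conditions that keep these Young exponents admissible.

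Putting the estimates together gives
\[
c\, R^{-(\beta + \sigma)}\, I_R^{\,2} \;\leq\; C\bigl(I_R^{\,2/(p+q)}\, R^{a_1} + I_R^{\,2(m-1)/(p+q)}\, R^{a_2}\bigr).
\]
Since both exponents $2/(p+q)$ and $2(m-1)/(p+q)$ are strictly less than $2$, a final Young application absorbs a fraction of $I_R^{\,2}$ into the left-hand side and leaves $I_R^{\,2} \leq C R^{\Theta}$ for some exponent $\Theta = \Theta(N, m, p, q, \beta, \sigma)$. Choosing $\sigma$ to balance the two contributions (the optimal parabolic scaling), the Fujita threshold $p + q = m - 1 + (N - \beta + m)/(N + \beta)$ is precisely the value at which $\Theta = 0$; under the subcritical assumption \eqref{cond1} one has $\Theta \leq 0$, so $I_R$ remains bounded as $R \to \infty$. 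Passing to the limit in the lower bound then forces $(K * u^p)\, u^q \equiv 0$ on $\mathbb R^N \times (0, \infty)$, and the positivity of $K$ combined with $u \geq 0$ yields $u \equiv 0$, contradicting nontriviality. The principal technical obstacle is the $\mathcal{A}$-term: no direct control on $\iint |\nabla u|^m$ is available in the weakly $m$-coercive framework, so the required bound must be extracted entirely from the cutoff structure, and the delicate balance of exponents is what produces the lower bound on $m$ appearing in \eqref{cond_beta_m}.
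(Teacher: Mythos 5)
Your high-level strategy --- the nonlinear-capacity argument with parabolic cutoffs, the lower bound $(K*u^p)(x,t)\geq cR^{-\beta}\int_{B_R}u^p$ obtained from \eqref{la}, the symmetrization and Cauchy--Schwarz step producing $c R^{-\beta-\sigma}I_R^2$ on the left, and the final Young absorption --- is the paper's, and that part of the proposal is sound.

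There is, however, a genuine gap in the treatment of the $\mathcal{A}$-term, and it is precisely the point you flag as the ``principal technical obstacle.'' You propose to bound $\iint |\mathcal A(x,u,\nabla u)|\,|\nabla\varphi_R|$, using only $|\mathcal A|\le C|\nabla u|^{m-1}$ (which indeed follows from \eqref{weak_m}), by H\"older and a Young split of the cutoff, arriving at $C\,I_R^{2(m-1)/(p+q)}R^{a_2}$. But $I_R$ is built out of powers of $u$ alone, while this term involves $|\nabla u|^{m-1}$; no amount of splitting the cutoff produces a bound of one by the other, and class $\mathcal{S}$ gives only qualitative $L^{m'}_{loc}$ information on $\mathcal A(x,u,\nabla u)$, not a usable quantitative estimate in $R$. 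The paper resolves this by a \emph{second} test function: it first plugs $\varphi=u_\tau^{-d}\psi^k$ (with $u_\tau=u+\tau$ and $0<d<1$ small) into \eqref{weak_form2}, which, via weak $m$-coercivity, yields the weighted a priori bound
\begin{equation*}
\int_0^\infty\!\!\int_{\mathbb R^N} u^{-d-1}\psi^k|\mathcal A(x,u,\nabla u)|^{m'}\,dx\,dt
\;\le\; c_1\!\int_0^\infty\!\!\int_{\mathbb R^N} u^{1-d}\psi^{k-1}|\psi_t|
\;+\; c_2\!\int_0^\infty\!\!\int_{\mathbb R^N} u^{m-d-1}\psi^{k-m}|\nabla\psi|^m ,
\end{equation*}
and only then inserts $\varphi=\psi^k$, running H\"older against this weighted estimate so that the $\mathcal{A}$-contribution is converted entirely into integrals of powers of $u$ against $\psi$-weights. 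Without the negative-power test function you cannot carry out this conversion, so the bound $C\,I_R^{2(m-1)/(p+q)}R^{a_2}$ is asserted rather than proved.

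A secondary issue: you conclude from $\Theta\le 0$ that $I_R$ stays bounded and then that $(K*u^p)u^q\equiv 0$. That is immediate only when $\Theta<0$, i.e.\ in the strictly subcritical regime. At the critical value $p+q = m-1+\frac{N-\beta+m}{N+\beta}$ one has $\Theta=0$ and boundedness alone gives nothing. The paper deals with this separately: boundedness implies $J\in L^2(0,\infty)$, and then one revisits the estimates keeping track of the fact that $\psi_t$ and $\nabla\psi$ are supported on annular/time-boundary regions, so the relevant quantity $\Theta_R\to 0$ by the dominated convergence theorem, which forces $J\equiv 0$. This refinement is missing from your sketch.
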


In particular, condition \eqref{cond1} yields the following upper bounds for $\beta$ 
\begin{equation}\label{cond2_beta}
\beta<
\begin{cases}
\displaystyle 1-\frac{m-2}{m}N &\quad\mbox{ if } 2<m<\frac{2N}{N-1},\\[0.1in]
\displaystyle \frac{m-(2-m)N}{4-m} &\quad\mbox{ if }\frac{2N+1}{N+1}<m<2.
\end{cases}
\end{equation}

The proof of Theorem \ref{thmain} is carried out through nonlinear capacity estimates specifically adapted to the nonlocal setting of our problem and the weak $m$-coercivity of the differential operator $L_{\mathcal{A}}$. Precisely, we derive integral estimates in time for the new quantity
$$
J(t)=\int_{\mathbb R^N}u^\ell (x,t)\psi^k(x, t)dx\, , \,\,\, t\geq 0,
$$
where $\ell=(p+q)/2$ and $\psi$ is a carefully selected test function  with compact support (see \eqref{testfunction}). As corollaries of our main results we obtain:

\begin{corollary} Assume
$$ 
0<\beta<1 \quad\text{and}\quad 
2<p+q\le 2+\frac{2(1-\beta)}{N+\beta}.
$$
Then problem 
$$\begin{cases}
&\displaystyle u_t-\mbox{\rm div} \biggl(\frac{\nabla u}{\sqrt{1+|\nabla u|^2}}\biggr)\geq  (K\ast u^p)u^q \quad\mbox{ in } \mathbb R^N \times \mathbb (0,\infty),\\[0.2in]
&u(x,0) = u_0(x)\ge0 \,\, \text{ in } \mathbb R^N,\end{cases}
$$ 
does not have nonnegative nontrivial solutions.
\end{corollary}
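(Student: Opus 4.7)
The plan is to recognize this corollary as a direct specialization of Theorem \ref{thmain} to the choice $m=2$ together with the mean curvature operator $\mathcal{A}(x,z,\xi) = \xi/\sqrt{1+|\xi|^2}$, which is exactly the $m$-mean curvature prototype with $m=2$. So the proof reduces to two routine verifications: that $\mathcal{A}$ satisfies the structural and weak $m$-coercivity assumptions, and that the numerical constraints in the corollary coincide with \eqref{cond_beta_m}--\eqref{cond1} when $m=2$.

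First I would check \eqref{A_cal} and \eqref{weak_m}. The identity $\mathcal{A}(x,z,0)=0$ is immediate and the positivity $\mathcal{A}(x,z,\xi)\cdot\xi = |\xi|^2/\sqrt{1+|\xi|^2}\ge 0$ is trivial. For weak $m$-coercivity with $m=m'=2$, one computes $|\mathcal{A}(x,z,\xi)|^2 = |\xi|^2/(1+|\xi|^2)$, so
$$
\mathcal{A}(x,z,\xi)\cdot\xi \;=\; \frac{|\xi|^2}{\sqrt{1+|\xi|^2}} \;=\; \sqrt{1+|\xi|^2}\;|\mathcal{A}(x,z,\xi)|^2 \;\ge\; |\mathcal{A}(x,z,\xi)|^2,
$$
which gives \eqref{weak_m} with $c_0=1$.

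Next I would translate the hypotheses of Theorem \ref{thmain} for $m=2$. The range $0<\beta<m/2$ becomes $0<\beta<1$, matching the corollary. The condition $m>(2N+1)/(N+1)$ holds automatically since $2(N+1)>2N+1$ for every $N\ge 1$. Finally, the two-sided bound \eqref{cond1} specializes to
$$
2\max\{1,m-1\} = 2 \;<\; p+q \;\le\; 1+\frac{N-\beta+2}{N+\beta} \;=\; \frac{2N+2}{N+\beta} \;=\; 2+\frac{2(1-\beta)}{N+\beta},
$$
which is precisely the range assumed in the corollary. With both hypothesis checks in place, Theorem \ref{thmain} applies and yields nonexistence of nonnegative nontrivial solutions.

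There is essentially no obstacle: the only non-algebraic step is the weak $2$-coercivity inequality, which is immediate from the factorization above. The rest is bookkeeping to match the two sets of numerical constraints. If anything, the subtle point to mention (if desired) is that the lower bound $p+q>2$ strictly excludes the threshold $p+q=2$, ensuring the capacity-type argument behind Theorem \ref{thmain} has the room it needs, but this is inherited from the general statement and requires no additional work here.
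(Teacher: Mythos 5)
Your proof is correct and follows exactly the intended route: the corollary is stated in the paper as a direct specialization of Theorem \ref{thmain} to $m=2$ with the generalized mean curvature prototype, and your verification of weak $2$-coercivity (with $c_0=1$) and the algebraic reduction of \eqref{cond_beta_m}--\eqref{cond1} to $0<\beta<1$ and $2<p+q\le 2+2(1-\beta)/(N+\beta)$ is precisely what is needed.
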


\begin{corollary}
Assume
$$0<\beta<m/2 \quad \mbox{ and }\quad m >\frac{2N+1}{N+1}\bigl(\in (1,2)\bigr).$$
If  \eqref{cond1} holds,
then the problem
$$\begin{cases}
&u_t-\Delta_m u\geq  (K\ast u^p)u^q \quad\mbox{ in } \mathbb R^N \times \mathbb (0,\infty),\\
&u(x,0) = u_0(x)\ge0 \,\, \text{ in } \mathbb R^N,\end{cases}
$$ 
does not have nonnegative nontrivial solutions.
\end{corollary}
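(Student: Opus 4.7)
The plan is simply to verify that the $m$-Laplacian fits into the abstract framework of Theorem \ref{thmain} and then to invoke that theorem directly; no new analysis should be required. First, I would write $\Delta_m u = \mathrm{div}\,\mathcal{A}(x,u,\nabla u)$ with the choice $\mathcal{A}(x,z,\xi) = |\xi|^{m-2}\xi$, which is manifestly a Carath\'eodory function on $\mathbb{R}^N\times[0,\infty)\times\mathbb{R}^N$.

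Next I would check the structural conditions on $\mathcal{A}$. Condition \eqref{A_cal} is immediate: $\mathcal{A}(x,z,0) = 0$, and
$$\mathcal{A}(x,z,\xi)\cdot\xi = |\xi|^{m-2}\xi\cdot\xi = |\xi|^m \geq 0.$$
For the weak $m$-coercivity \eqref{weak_m}, since $|\mathcal{A}(x,z,\xi)| = |\xi|^{m-1}$ and $(m-1)m' = m$, one obtains
$$|\mathcal{A}(x,z,\xi)|^{m'} = |\xi|^{(m-1)m'} = |\xi|^m = \mathcal{A}(x,z,\xi)\cdot\xi,$$
so \eqref{weak_m} holds with $c_0 = 1$ (in fact as an equality, showing that the $m$-Laplacian is a borderline weakly $m$-coercive operator).

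Finally, since the hypotheses on $\beta$, $m$, $p$, $q$ and on the kernel $K$ in the statement of the corollary are precisely those required in \eqref{cond_beta_m}--\eqref{cond1} of Theorem \ref{thmain}, and since nonnegative weak solutions of the $m$-Laplacian problem are a particular instance of class $\mathcal{S}$ solutions of \eqref{main} for the above choice of $\mathcal{A}$, a direct application of Theorem \ref{thmain} yields nonexistence of nonnegative nontrivial solutions. There is no genuine obstacle here: the entire content of the corollary lies in recognizing the $m$-Laplacian as the prototypical example of a weakly $m$-coercive operator, which is already noted after \eqref{weak_m}.
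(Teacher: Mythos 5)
Your proof is correct and matches the paper's intent exactly: the corollary is a direct specialization of Theorem \ref{thmain} to $\mathcal{A}(x,z,\xi)=|\xi|^{m-2}\xi$, for which you correctly verify \eqref{A_cal} and \eqref{weak_m} (with $c_0=1$, as equality). The paper gives no separate argument for this corollary, treating it precisely as the prototype instance of a weakly $m$-coercive operator.
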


The critical Fujita exponent also depends on the asymptotic behavior at infinity of the initial function $u_0(x)$. The result below is an extension of Lemma 26.2 in \cite{MP} to the setting of nonlocal problems.

\begin{corollary}\label{cornu} Assume that $u_0\in L^1_{loc}(\mathbb{R}^N)$ and for all $R>0$ large enough $u_0$ satisfies
\begin{equation}\label{u_0_nu}
\int_{B_R}u_0(x)dx\ge c R^\nu\,,
\end{equation}
for some positive constant $c$ and an exponent $0\le \nu< N+\beta$, $0<\beta<(m+\nu)/2$.
If
 \begin{equation}\label{critic_nu}
2\max\{1,m-1\}<p+q<m-1+\frac{N-\beta+m}{N+\beta-\nu},
\end{equation}
then problem  \eqref{main} does not have nonnegative nontrivial solutions $u\in \mathcal{S}$.
\end{corollary}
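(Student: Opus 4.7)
The plan is to parallel the proof of Theorem~\ref{thmain} while retaining the initial-datum term in the weak formulation~\eqref{weak_form1}, which can be dropped when $u_0\equiv 0$ but carries useful information when $u_0$ has power growth at infinity.

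First I would take the same family of test functions $\varphi = \psi^k$ used in the proof of Theorem~\ref{thmain}, but normalized so that $\psi(\cdot,0)\equiv 1$ on $B_{R/2}$ and $\psi$ is supported in a parabolic cylinder of spatial scale $R$ and temporal scale $R^m$. Moving the initial-datum term to the left-hand side of \eqref{weak_form1} yields
\[
\int_{\mathbb{R}^N} u_0(x)\,\varphi(x,0)\,dx + \int_0^\infty\!\!\int_{\mathbb R^N}(K\ast u^p)u^q \varphi\,dx\,dt \le \int_0^\infty\!\!\int_{\mathbb R^N} u\,|\varphi_t|\,dx\,dt + \int_0^\infty\!\!\int_{\mathbb R^N} \mathcal A(x,u,\nabla u)\cdot \nabla\varphi\,dx\,dt,
\]
and assumption \eqref{u_0_nu} bounds the first left-hand term from below by $cR^\nu$ for all $R$ sufficiently large.

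Second, the two terms on the right-hand side are controlled exactly as in the proof of Theorem~\ref{thmain}: the temporal term through Young's inequality with exponent $\ell=(p+q)/2$, and the divergence term through the weak $m$-coercivity \eqref{weak_m} together with H\"older's inequality at exponent $m'$. The convolution term on the left is treated, as before, via the lower bound $K(x-y)\gtrsim|x-y|^{-\beta}$ from \eqref{la}, Young's inequality on the convolution, and the auxiliary quantity $J(t)=\int u^\ell\psi^k\,dx$ introduced in the paper.

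Third, the usual bookkeeping with the parabolic scaling ($x$ at scale $R$, $t$ at scale $R^m$) produces, in place of the critical estimate of Theorem~\ref{thmain}, an inequality of the shape
\[
c\,R^\nu \le C\, R^{\,N+m \,-\, (p+q-m+1)(N+\beta-\nu)},
\]
in which the effective dimension $N+\beta$ of Theorem~\ref{thmain} is shifted to $N+\beta-\nu$ to reflect the extra mass contributed by $u_0$. Hypothesis \eqref{critic_nu} is precisely $(p+q-m+1)(N+\beta-\nu) > N-\beta+m$, so the right-hand exponent is strictly smaller than $\nu$ and letting $R\to\infty$ forces $u\equiv 0$; the subcriticality $p+q>2\max\{1,m-1\}$ is what keeps the Young/H\"older steps admissible, exactly as in Theorem~\ref{thmain}.

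The main obstacle is verifying that the replacement $N+\beta\to N+\beta-\nu$ in the scaling exponents is indeed the one produced by the initial-datum term, which requires that the spatial region where \eqref{u_0_nu} is invoked match the region where the subcritical estimates on $u^{p+q}$ are applied. The relaxed hypothesis $0<\beta<(m+\nu)/2$, replacing $\beta<m/2$, is what ensures convergence of the annular integrals coming from the convolution weight after the $\nu$-dependent rescaling; without it the tail of $K$ would generate divergent integrals once $\nu$ is large.
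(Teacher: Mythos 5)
Your overall strategy is the paper's: choose $\varphi=\psi^k$, do \emph{not} drop the initial-datum term in \eqref{weak_form1}, bound it from below by $cR^\nu$ using \eqref{u_0_nu}, run the capacity estimates of Lemmas \ref{lp1}--\ref{lp3} to bound the remaining terms by a power of $R$, and extract a contradiction by comparing the two powers. Structurally this is exactly what the paper does. However, there are two concrete gaps in the way you carry it out.

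\textbf{The exponent formula is wrong.} The right-hand side of the final comparison comes entirely from the test-function estimates \eqref{PSI}--\eqref{PSII} in Lemma \ref{lp3} and the absorption of the $J(t)^2$ term via Young's inequality; none of this sees $\nu$. Hence the inequality has the form
$\,cR^\nu \le C R^{\alpha}$, where $\alpha$ depends on $N,\beta,m,p+q,\gamma$ but not on $\nu$ (the paper's value, after the right choice of $\gamma$, is $\alpha = N+\beta - \frac{N-\beta+m}{p+q-m+1}$). Your proposed exponent $N+m-(p+q-m+1)(N+\beta-\nu)$ both contains $\nu$ and, upon using \eqref{critic_nu}, is only bounded above by $2\beta$, not by $\nu$. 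To see the problem concretely, take $\nu=0$ (which reduces to the setting of Theorem \ref{thmain}); your inequality becomes $c \le CR^{\,\theta}$ with $\theta<2\beta$, and since $\beta>0$ this gives no contradiction as $R\to\infty$, whereas the paper's $\alpha<\nu=0$ does.

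\textbf{The temporal scaling is not justified.} You propose $t$ at scale $R^m$, i.e., $\gamma=m$. Already in the proof of Theorem \ref{thmain} the paper uses $\gamma=(m-2)(N+\beta)+m$ (which equals $m$ only when $m=2$), and in the proof of Corollary \ref{cornu} it uses a different value still,
\[
\gamma=\frac{(N-\beta)(m-2)+m(p+q-1)}{p+q-m+1},
\]
chosen so that the three exponents $\alpha_1,\alpha_2,\alpha_3$ of Lemma \ref{lp3} coincide, giving precisely $\alpha = N+\beta - \frac{N-\beta+m}{p+q-m+1}$; hypothesis \eqref{critic_nu} is then equivalent to $\alpha<\nu$. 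With $\gamma=m$ (for $m\neq2$) the three exponents are distinct and you would have to show that the largest of them is below $\nu$; this does not follow from \eqref{critic_nu} and fails in general. Finally, the role of the relaxed hypothesis $0<\beta<(m+\nu)/2$ is not to control annular convolution integrals as you suggest, but simply to guarantee that the range of admissible $p+q$ in \eqref{critic_nu} is nonempty.
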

In this case, condition \eqref{critic_nu} yields the following upper bound for $\beta$ 
\begin{equation}\label{cond2_beta_nu}
\beta<
\begin{cases}
\displaystyle 1-\frac{(m-2)N-(m-1)\nu}{m} &\quad\mbox{ if } 2<m<\frac{2N}{N-1},\\[0.1in]
\displaystyle \frac{m-(2-m)N+(3-m)\nu}{4-m} &\quad\mbox{ if }\frac{2N+1}{N+1}<m<2.
\end{cases}
\end{equation}
In particular, when $m>2$, if we restrict the range of $\nu$ to the set
$$\frac{N(m-2)-m}{m-1}<\nu<N+\beta,$$
then the upper bound $2N/(N-1)$  on  $m$ in \eqref{cond2_beta_nu} can be removed.

Next, we turn to the study of \eqref{main2}. In this setting we derive our a priori estimates employing similar arguments to those we used in the proof of Theorem \ref{thmain}. Unlike the approach for \eqref{main} where the  a priori estimates are obtained by choosing two classes of test functions in \eqref{weak_form2}, for the counterpart problem \eqref{main2} we can only use a single class of test functions (this was already emphasised in \cite[Section 26]{MPbook} for local coercive parabolic problems). We shall overcome this fact by imposing a higher  locally integrability on the solution. The precise solution space will be defined in what follows. 

Assume 
\begin{equation}\label{new_beta_coerc}0<\beta<m/2\quad\text{and}\quad p+q>2\max\{1,m-1\}.
\end{equation} 
Let $d>0$ be such that
\begin{equation}\label{eqd}
2<\Gamma(d):=\max\Big\{\frac{p+q+d}{d+1}\, , \, \frac{p+q+d}{d+m-1}  \Big\}<\frac{2N+m}{N+\beta}.
\end{equation}
Such a value $d>0$ always exists since $\Gamma$ is decreasing as a function of $d$ and 
$$
\Gamma(0)>2 \quad\mbox{ and }\quad \lim_{d\to \infty}\Gamma(d)=1<\frac{2N+m}{N+\beta}.
$$

Our main result concerning the problem \eqref{main2} is the following.

\begin{theorem}\label{thmain2}
If \eqref{new_beta_coerc} and \eqref{eqd} hold, 
then problem \eqref{main2} does not have nonnegative nontrivial solutions 
 $u$ in the class
$$
\mathcal{S}\cap \Big\{(K\ast u^p)u^{q+d} \in  L^{1}_{loc}(\mathbb R^N\times [0, \infty)) \Big\}.
$$
\end{theorem}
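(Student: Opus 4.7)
The plan is to adapt the nonlinear capacity method of Theorem~\ref{thmain} to the coercive case \eqref{main2}, in which the divergence term now enters \eqref{weak_form3} with the opposite sign. A pure cutoff $\psi^{k}$ would no longer let us exploit a sign structure, so instead I would take the test function $\varphi = u^{d}\psi^{k}$, where $\psi(x,t) = \eta(x/R)\zeta(t/R^{m})$ is a standard product of smooth cutoffs supported in a parabolic cylinder of scale $R$ (the choice $\alpha = m$ for the time scaling being dictated by the natural parabolic balance of the two right-hand-side terms below), and $k$ is a large integer chosen at the end. The extra integrability hypothesis $(K\ast u^{p})u^{q+d}\in L^{1}_{\mathrm{loc}}$ is exactly what legitimizes this choice, after the usual regularization $u\mapsto u + \varepsilon$ and $\varepsilon\to 0$.

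Substituting into \eqref{weak_form3}, integration by parts in time produces the initial boundary term $-\frac{1}{d+1}\int_{\mathbb{R}^{N}} u_{0}^{d+1}\psi^{k}(\cdot,0)dx$, which has the favorable sign and can be dropped, plus $-\frac{k}{d+1}\int\!\!\int u^{d+1}\psi^{k-1}\psi_{t}$. Expanding $\nabla\varphi$, the summand $-d\int\!\!\int \mathcal{A}\cdot\nabla u\,u^{d-1}\psi^{k}$ is nonpositive by \eqref{A_cal} and via \eqref{weak_m} actually dominates $c\int\!\!\int |\mathcal{A}|^{m'}u^{d-1}\psi^{k}$. Young's inequality with conjugate exponents $(m',m)$ applied to the remaining cross-term $|\mathcal{A}||\nabla\psi|u^{d}\psi^{k-1}$, followed by absorption of the resulting $|\mathcal{A}|^{m'}$-piece into this gradient quantity, yields the capacity estimate
\begin{equation*}
\int\!\!\int (K\ast u^{p})u^{q+d}\psi^{k}\,dx\,dt \leq C_{1}\int\!\!\int u^{d+1}\psi^{k-1}|\psi_{t}|\,dx\,dt + C_{2}\int\!\!\int u^{d+m-1}|\nabla\psi|^{m}\psi^{k-m}\,dx\,dt.
\end{equation*}

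For the left-hand side the plan is to employ the nonlocal symmetrization trick: insert the trivial factor $\psi^{k}(y,t)\leq 1$ into the convolution integral, swap $x\leftrightarrow y$ using the symmetric pointwise lower bound $K(z)\geq c|z|^{-\beta}$ on $|z|\geq \rho$ (the complementary region being absorbed using the pointwise positivity of $K$), and apply the AM--GM inequality $u^{p}(y)u^{q+d}(x)+u^{p}(x)u^{q+d}(y)\geq 2u^{\ell}(x)u^{\ell}(y)$ with $\ell=(p+q+d)/2$. Combined with $|x-y|\leq CR$ on the support of $\psi$, this yields
\begin{equation*}
\int\!\!\int (K\ast u^{p})u^{q+d}\psi^{k}\,dx\,dt \geq cR^{-\beta}\int \zeta^{k}(t/R^{m})\,A(t)^{2}\,dt,\qquad A(t):=\int u^{\ell}(x,t)\eta^{k}(x/R)\,dx.
\end{equation*}
Hölder in $x$ with exponents $\ell/(d+1)$ and $\ell/(d+m-1)$ (both strictly above $1$ precisely because $\Gamma(d)>2$) bounds the two right-hand-side integrands by powers of $A(t)$ times $R^{N(1-(d+1)/\ell)}$ and $R^{N(1-(d+m-1)/\ell)}$ respectively; a second Hölder in $t$ then converts the resulting time integrals into fractional powers of $J := \int A(t)^{2}\zeta^{k}(t/R^{m})\,dt$.

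The upshot is an inequality of the form $R^{-\beta}J \leq C R^{a_{1}}J^{(d+1)/(2\ell)} + C R^{a_{2}}J^{(d+m-1)/(2\ell)}$, and a direct computation with $\alpha=m$ identifies the two conditions $a_{i}+\beta<0$ with the two sides of \eqref{eqd}: namely $(p+q+d)/(d+1)<(2N+m)/(N+\beta)$ and $(p+q+d)/(d+m-1)<(2N+m)/(N+\beta)$. A standard dichotomy argument then forces $J\to 0$ as $R\to\infty$, and monotone convergence yields $\int_{0}^{\infty}\bigl(\int_{\mathbb{R}^{N}} u^{\ell}\,dx\bigr)^{2}dt=0$, hence $u\equiv 0$. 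The most delicate point I expect is the bookkeeping in the absorption stage: $k$ must be taken large enough so that every residual power of $\eta$ and $\zeta$ emerging from the Hölder chain stays nonnegative, and the $|\mathcal{A}|^{m'}$-absorption must be performed jointly with the expansion of $\nabla\varphi$ rather than afterward, since the structural hypotheses give no independent pointwise control on $|\mathcal{A}|$ away from the gradient term.
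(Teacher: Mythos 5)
Your proposal is correct and follows essentially the same route as the paper: test with $\varphi = u^{d}\psi^{k}$ (mollified) in \eqref{weak_form3}, drop the favorable-sign initial term, absorb the $|\mathcal{A}|^{m'}$-piece from Young's inequality into the coercive term to obtain exactly the estimate of Lemma~\ref{lpp1}, bound the convolution term from below by $cR^{-\beta}L(t)^{2}$ via the symmetrization/AM--GM trick, apply Hölder in $x$ with exponents $\tau/(d+1)$ and $\tau/(d+m-1)$ and then in $t$, choose $\gamma=m$, and close with Young's inequality since both exponents are negative exactly when $\Gamma(d)<(2N+m)/(N+\beta)$. The only trivial slip is a power of the temporal cutoff in your definition of $J$ (it should carry $\zeta^{2k}$, not $\zeta^{k}$, to equal $\int L(t)^{2}\,dt$), which does not affect the argument.
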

In particular, if \eqref{new_beta_coerc} holds, then \eqref{main2} has no nonnegative nontrivial solutions $u\in \mathcal{S}\cap C(\mathbb R^N\times (0, \infty))$.

\section{Proof of Theorem \ref{thmain}}

Suppose by contradiction that \eqref{main} admits a nonnegative nontrivial solution 
$u\in \mathcal{S}$. We start with the following result which provides extra local integrability of $u$.

\begin{lemma}\label{lp0}
Let $u\in \mathcal{S}$ be a nonnegative solution of \eqref{main}. Then,
\begin{equation}\label{uLp+q_2}
u^{(p+q)/2}\in L^1_{loc}(\mathbb{R}^N\times [0,\infty)).
\end{equation}
\end{lemma}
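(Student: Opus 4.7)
The strategy is to extract the required local integrability of $u^{(p+q)/2}$ directly from the weak formulation \eqref{weak_form1} by combining the positivity of $K$ on bounded regions with a Cauchy--Schwarz factorization of the convolution integrand.

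Fix $R,T>0$ and select a test function $\varphi\in C^1_c(\mathbb R^N\times\mathbb R)$ with $0\le\varphi\le 1$, $\varphi\equiv 1$ on $B_R\times[0,T]$ and $\mathrm{supp}\,\varphi\subset B_{2R}\times[0,2T]$. Using \eqref{weak_form1} with this $\varphi$, every term on the right-hand side is finite: $-\int u_0\,\varphi(\cdot,0)\,dx \le 0$ because $u_0\ge 0$; $\int u\,\varphi_t\,dx\,dt$ is finite since $u\in W^{1,m}_{\mathrm{loc}}\subset L^1_{\mathrm{loc}}$; and $\int\mathcal A\cdot\nabla\varphi\,dx\,dt$ is finite by H\"older combined with $\mathcal A(x,u,\nabla u)\in [L^{m'}_{\mathrm{loc}}]^N$. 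This yields
\begin{equation*}
\int_0^T\int_{B_R}(K\ast u^p)(x,t)\,u^q(x,t)\,dx\,dt\le C(R,T)<\infty.
\end{equation*}

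The key step is to bound the left-hand side from below in terms of $u^{(p+q)/2}$. By the assumptions on $K$---positivity, continuity on $\mathbb R^N\setminus\{0\}$, and $\liminf_{x\to 0}K(x)>0$---there exists $K_0=K_0(R)>0$ with $K(z)\ge K_0$ for every $z\in\overline{B_{2R}}\setminus\{0\}$. Restricting the convolution integral to $y\in B_R$ (so that $x-y\in B_{2R}$ whenever $x\in B_R$) and using Fubini gives
\begin{equation*}
\int_{B_R}(K\ast u^p)(x,t)\,u^q(x,t)\,dx\ge K_0\left(\int_{B_R}u^p(y,t)\,dy\right)\left(\int_{B_R}u^q(x,t)\,dx\right).
\end{equation*}
By Cauchy--Schwarz applied to the splitting $u^{(p+q)/2}=u^{p/2}\cdot u^{q/2}$, the right-hand side dominates $K_0\bigl(\int_{B_R}u^{(p+q)/2}(x,t)\,dx\bigr)^2$. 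Combining with the previous step, the map $t\mapsto\int_{B_R}u^{(p+q)/2}(x,t)\,dx$ belongs to $L^2(0,T)$, and therefore also to $L^1(0,T)$ since $[0,T]$ has finite measure. A final application of Fubini yields $u^{(p+q)/2}\in L^1(B_R\times[0,T])$, and since $R,T>0$ are arbitrary the lemma follows.

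The one delicate point I anticipate is justifying that \eqref{weak_form1} may be applied with a test function whose temporal support reaches $t=0$: the regularity conditions defining $\mathcal S$ are formulated on the open half-space $\mathbb R^N\times(0,\infty)$, yet the weak form is written with an explicit $u_0$-term, so finiteness of the right-hand side terms at $t=0$ must be built into the weak-solution framework. Once this is settled, the remainder of the argument is merely a double application of Cauchy--Schwarz together with the positivity of $K$ on compact sets.
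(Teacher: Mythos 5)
Your proof is correct, and the core mechanism coincides with the paper's: bound $K\ast u^p$ from below on $B_R$ by $c(R)\int_{B_R}u^p$, then apply Cauchy--Schwarz in $x$ to produce $\bigl(\int_{B_R}u^{(p+q)/2}\bigr)^2$ and finish by integrating in $t$. You differ in two places. For the lower bound on the kernel you extract a uniform minimum $K_0(R)>0$ of $K$ on $\overline{B_{2R}}\setminus\{0\}$ from continuity, positivity and $\liminf_{x\to 0}K>0$; the paper instead splits $|x-y|\le\rho$ from $|x-y|>\rho$ and invokes the quantitative hypothesis \eqref{la} to get the sharper constant $CR^{-\beta}$, which it needs in later lemmas but which is unnecessary here, so your simpler version is perfectly adequate. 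More substantially, you obtain finiteness of $\int_0^T\int_{B_R}(K\ast u^p)u^q$ by inserting a cutoff $\varphi$ into the weak formulation \eqref{weak_form1} and estimating the right-hand side; the paper just reads this finiteness off condition (ii) in the definition of $\mathcal S$, which makes your detour correct but redundant (indeed, the weak form \eqref{weak_form1} is only meaningful because (ii) holds). The subtlety you flag about pushing the temporal support to $t=0$ is real, but it is not specific to your route: condition (ii) is written with $(0,\infty)$ while the conclusion and the test class $C^1_c(\mathbb R^N\times\mathbb R)$ both implicitly require local integrability up to $t=0$, so the paper's proof shares the same implicit assumption.
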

\begin{proof} Let $R>\rho$ be large, where $\rho>0$ appears in \eqref{la}. For $x\in B_R(0)$, using \eqref{la} we have
\begin{equation}\label{KKK1}
\begin{aligned}
(K\ast u^p)(x,t)& \geq \int_{\mathbb B_{R}} K(x-y)u^p(y,t) dy\\
&=\int_{|x-y|\leq \rho} K(x-y)u^p(y,t))dy+\int_{\substack{|x-y|>\rho\\  |x|,\, |y|<R}} K(x-y)u^p(y,t)dy\\
&\geq \inf_{z\in B_1(0)} K(z) \int_{|x-y|\leq \rho} u^p(y,t)dy+c\int_{\substack{|x-y|>\rho \\  |x|,\, |y|<R}}  |x-y|^{-\beta} u^p(y,t) dy\\
&\geq \inf_{z\in B_1(0)} K(z)  \int_{|x-y|\leq \rho} u^p(y,t) dy+c(2R)^{-\beta}  
\int_{|x-y|>\rho ,\, |y|<R}  u^p(y,t) dy\\
&\geq CR^{-\beta}\left\{ \int_{|x-y|\leq \rho\, , \,  |y|<R} u^p(y,t)dy+\int_{|x-y|>\rho ,\, |y|<R}  u^p(y,t) dy \right\} \\
&\geq CR^{-\beta}\int_{B_R(0) } u^p(y,t) dy,
\end{aligned}
\end{equation}
provided $R>\rho$ is large enough.

Next, using \eqref{KKK1} and H\"older's inequality we find
$$
\begin{aligned}
\infty&>\int_0^T\!\!\int_{B_{R}(0)} \big(K\ast u\big)(x,t)u^q(x,t) dxdt \\
&\geq CR^{-\beta}\int_0^T\Big(\int_{B_R(0)} u^p(x) dx\Big)   \Big(\int_{B_R(0)} u^q(x) dx\Big) dt \qquad \mbox{  (by \eqref{KKK1})}\\
&\geq CR^{-\beta} \int_0^T \Big(\int_{B_R(0)} u^{(p+q)/2}(x) dx\Big)^2 dt \qquad \mbox{  (by H\"older's inequality on $B_R(0)$)}\\
&\geq \frac{CR^{-\beta}}{T} \Big( \int_0^T \int_{B_R(0)} u^{(p+q)/2}(x) dx \, dt \Big)^2 \qquad \mbox{  (by H\"older's inequality on $[0,T]$)}\\
\end{aligned}
$$
which shows that $u^{(p+q)/2}\in L^1(B_R(0)\times [0,T])$ and concludes our proof. 
\end{proof}

From \eqref{cond1} we have $(p+q)/2>\max\{1, m-1\}$. We may thus choose $d>0$ small enough such that $(p+q)/2>(m-1)(d+1)$. Using  H\"older's inequality we deduce
\begin{equation}\label{Summ_with_d}
u^{1-d}\, , \, u^{(m-1)(1-d)}\,,\, u^{(m-1)(d+1)}\in L^1_{loc}(\mathbb{R}^N\times [0,\infty)).
\end{equation}
This will ensure that all integrals in this section are finite.

The proof of Theorem \ref{thmain} will be achieved along a several lemmas. First we want to precise the choice of the test functions in \eqref{weak_form2}. Take a standard cut off function $\xi \in C^{1}[0,\infty)$ such that
\begin{itemize}
\item $\xi = 1$ in $(0,1)$, $\xi= 0$  in $(2,\infty)$;
\item  $0\le \xi\le 1$ and $|\xi'| \leq C$ in $[0, \infty)$, for some $C>0$.
\end{itemize}

Now take $R > 0$ and consider the functions
\begin{equation} \label{cutoff}
\chi_R (x) = \xi \biggl(\frac{|x|}{R}\biggr) ,\quad \eta_{R}(t ) = \xi\biggl(\frac{t}{R^{\gamma}} \biggr),
\end{equation}
with $\gamma\ge1$. Clearly $\chi_R (x)=1$ in $B_R(0)$, where $B_R(0)$ denotes  the open ball in $\mathbb{R}^N$, centered at the origin and having radius $R > 0$.

Finally define, for all $R > 0$, the nonnegative cut off function $\psi:\mathbb R^N\times [0, \infty)\to [0, \infty)$, given by
\begin{equation} \label{testfunction}
\psi(x,t) = \chi_R(x) \, \eta_{R}(t).
\end{equation}

Note that,  $\psi$ is the restriction to  $\mathbb R^N\times [0, \infty)$ of a 
$C^1_c(\mathbb R^N\times \mathbb R)$ function, and,  as in \cite[Lemma 3.1]{FL}, by the shape of $\psi$ in \eqref {testfunction} the following inequalities hold for $\varsigma >1$, $\gamma\ge1$, $k$ and $R$ sufficiently large, say $k>\varsigma$,
\be\label{PSI}
\int_0^\infty\!\int_{\mathbb R^N}{\psi^{k-\varsigma}}{|\nabla\psi|^{\varsigma}}dxdt\le c R^{-\varsigma +N +\gamma},\ee
\be\label{PSII} \int_0^\infty\!\int_{\mathbb R^N}{\psi^{k-\varsigma}}{|\psi_t|^{\varsigma}} dxdt \le c R^{-\gamma\varsigma +N +\gamma},
\ee
where $c>0$ is a constant.
\begin{lemma}\label{lp1}
Let $u\in \mathcal{S}$ be a nonnegative solution of \eqref{main} and let $\psi$ be defined by \eqref{testfunction}. Then,
\begin{equation}\label{ineq2.0}
\begin{aligned} 
\int_0^\infty\int_{\mathbb{R}^N} & (K\ast u^p)u^{q-d} \psi^{k} \, dx \, dt+
\int_0^\infty\!\!\int_{\mathbb R^N}  u^{- d - 1} \psi^{k} |\mathcal{A}(x,u,\nabla u)|^{m'}\, dx \,  dt  \\
&\leq   c_1\int_0^\infty\!\!\int_{\mathbb R^N}  u^{1 - d}\psi^{k - 1}|\psi_t| \, dx  dt + 
c_2 \int_0^\infty\!\!\int_{\mathbb R^N}  u^{ m- d- 1}\psi^{k - m} |\nabla \psi|^m  dx \, dt,
\end{aligned}\end{equation}
for some constants $c_1,c_2>0$ and $d\in(0,1)$. 
\end{lemma}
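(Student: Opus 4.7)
The plan is to test the weak formulation \eqref{weak_form2} with (a regularization of)
$$\varphi = u^{-d}\psi^k,$$
where $\psi$ is the cutoff function defined in \eqref{testfunction}, $k>m$ is an integer, and $d\in(0,1)$ is chosen as after \eqref{Summ_with_d}. Since $u$ may vanish, I would replace $u$ by $u+\varepsilon$ inside the negative powers, perform all the manipulations below for $\varepsilon>0$, and pass to the limit $\varepsilon\to 0^+$ by dominated convergence; this last step is precisely what requires the local integrability of $u^{1-d}$, $u^{m-1-d}$ and $(K\ast u^p)u^{q-d}$ furnished by Lemma~\ref{lp0} and \eqref{Summ_with_d}.

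For the parabolic term I rewrite
$$u_t\, u^{-d}\psi^k = \frac{1}{1-d}\,\partial_t(u^{1-d})\,\psi^k$$
and integrate by parts in $t$. Since $\psi$ has compact support in the time variable, only a boundary contribution at $t=0$ survives, namely $-\frac{1}{1-d}\int_{\mathbb R^N} u_0^{1-d}\psi^k(x,0)\,dx$, which is nonpositive thanks to $u_0\geq 0$ and $d<1$; dropping it from the inequality yields the upper bound $\frac{k}{1-d}\int_0^\infty\!\!\int_{\mathbb R^N} u^{1-d}\psi^{k-1}|\psi_t|\,dx\,dt$ that constitutes the first term on the right-hand side of \eqref{ineq2.0}. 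For the diffusion term I compute
$$\nabla\varphi = -d\,u^{-d-1}\psi^k\nabla u + k\,u^{-d}\psi^{k-1}\nabla\psi,$$
and invoke the weak $m$-coercivity \eqref{weak_m} on the first piece to produce the upper bound $-d c_0\,u^{-d-1}\psi^k|\mathcal{A}(x,u,\nabla u)|^{m'}$; once moved to the left, this is exactly the second term on the left-hand side of \eqref{ineq2.0}. The remaining cross term $k\,u^{-d}\psi^{k-1}|\mathcal{A}||\nabla\psi|$ is handled by Young's inequality with conjugate exponents $m'$ and $m$, after factoring as
$$\bigl(u^{-(d+1)/m'}\psi^{k/m'}|\mathcal{A}|\bigr)\cdot\bigl(u^{(m-1-d)/m}\psi^{k/m-1}|\nabla\psi|\bigr);$$
choosing the small parameter of size $\sim dc_0/k$ allows the $u^{-d-1}\psi^k|\mathcal{A}|^{m'}$ factor to be reabsorbed on the left, leaving the $u^{m-1-d}\psi^{k-m}|\nabla\psi|^m$ term appearing in \eqref{ineq2.0}.

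The main obstacle is the simultaneous justification of the limit $\varepsilon\to 0^+$ in all four integrals, including the nonlocal factor $(K\ast u^p)u^{q-d}\psi^k$, since the negative powers of $u+\varepsilon$ blow up on the set $\{u=0\}$. This is exactly where the extra local integrability established in Lemma~\ref{lp0} together with the derived inclusions \eqref{Summ_with_d} become indispensable, providing the $L^1_{loc}$ majorants needed for dominated convergence.
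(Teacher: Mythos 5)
Your proposal is essentially the same as the paper's proof: test with a regularized version of $u^{-d}\psi^k$, integrate by parts in $t$ and discard the nonpositive boundary term at $t=0$, invoke weak $m$-coercivity on the $\nabla u$-part of $\nabla\varphi$, absorb the cross term by Young's inequality with exponents $m',m$, and pass to the limit using the local integrability supplied by Lemma~\ref{lp0} and \eqref{Summ_with_d}. One technical point you gloss over: adding $\varepsilon$ to $u$ does not make $(u+\varepsilon)^{-d}\psi^k$ a $C^1_c$ test function (it is only $W^{1,m}_{\rm loc}$), so the paper actually regularizes in two stages --- first replacing $u$ by a mollification $\tilde u_\epsilon = \tau + \xi_\epsilon * u$ so that $\tilde u_\epsilon^{-d}\psi^k$ is genuinely $C^1_c$, then letting $\epsilon\to0$ to reach $u_\tau=\tau+u$, and only at the very end letting $\tau\to0$ (via Fatou on the left-hand side terms and dominated convergence on the right); your single-parameter limit would need this extra mollification step spelled out to be rigorous.
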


\begin{proof} Let $\epsilon > 0$ be sufficiently small and let $\{\xi_{\epsilon}\}_{\epsilon>0}$ be a standard family of mollifiers. For $\tau > 0$ we define
$$
\tilde{u}_\epsilon(x,t)  :=  \tau + \int_{\mathbb R^N} \, \xi_{\varepsilon}(x-y,t) u(y,t) \, dy \quad \text{and} \qquad u_{\tau}(x,t) := \tau + u(x,t)
$$
for $(x,t)\in {\mathbb R}^N\times (0, \infty)$. Clearly $\tilde{u}_{\epsilon}, u_{\tau} \geq \tau > 0$. In particular, since
$u \in L^{1}_{\rm loc} (\mathbb R^N\times (0, \infty))$, we have $\tilde{u}_{\epsilon} \in C^1(\mathbb R^N\times (0, \infty))$ so that the function
$
\varphi(x,t) = \tilde{u}_{\epsilon}^{-d} \; \psi^{k}(x,t)\geq 0,
$
with $d\in(0,1)$ sufficiently small so that \eqref{Summ_with_d} holds, $k$ positive to be chosen and  $\psi(x,t)$ defined in $\eqref{testfunction}$,
can be used as test function in the weak formulation of $\eqref{main}$, given by $\eqref{weak_form2}$, so that
$$\begin{aligned}
\int_0^\infty\int_{\mathbb{R}^N} (K\ast u^p)&u^q\tilde{u}_\epsilon^{-d} \psi^{k} \, dx \, dt +d \int_0^\infty\int_{\mathbb{R}^N} \psi^{k}\tilde{u}_{\epsilon}^{-d-1}\,\mathcal{A}(x,u,\nabla u) \cdot \nabla\tilde{u}_{\epsilon} \, dx \, dt \\& \le \int_0^\infty\int_{\mathbb{R}^N}  u_t \;
\tilde{u}_{\epsilon}^{-d} \; \psi^{k} \, dx \, dt
 + k  \int_0^\infty\int_{\mathbb{R}^N} \psi^{k-1} \tilde{u}_{\epsilon}^{-d} \mathcal{A}(x,u,\nabla u) \cdot  \nabla\psi \, dx \, dt. \\
\end{aligned}$$

Since $\tilde{u}_{\epsilon} \to u_{\tau}$ in $W^1_{\rm loc}(\mathbb{R}^N)$ as $\varepsilon \to 0$,
using Lebesgue dominated convergence theorem, and being $\nabla u_\tau=\nabla u$, we arrive at
\begin{equation}\begin{aligned}
\int_0^\infty\int_{\mathbb{R}^N} &(K\ast u^p)u^q u_\tau^{-d} \psi^{k} \, dx \, dt +d\int_0^\infty\int_{\mathbb{R}^N} \psi^{k} u_\tau^{-d-1}\,\mathcal{A}(x,u,\nabla u) \cdot \nabla u \, dx \, dt
\\& \le \int_0^\infty\int_{\mathbb{R}^N}  u_t \,
u_\tau^{-d} \psi^{k} \, dx \, dt
 + k  \int_0^\infty\int_{\mathbb{R}^N}  \psi^{k-1}u_\tau^{-d} |\mathcal{A}(x,u,\nabla u)| |\nabla\psi| \, dx \, dt.
\end{aligned}\end{equation}
Using the weak $m$-coerciveness of $\mathcal{A}$, from \eqref{weak_m} we deduce
\begin{equation}\label{ineq0}\begin{aligned}
\int_0^\infty\int_{\mathbb{R}^N} &(K\ast u^p)u^q u_\tau^{-d} \psi^{k} \, dx \, dt +
dc_0  \int_0^\infty\!\!\int_{\mathbb{R}^N} \psi^{k} u_\tau^{-d-1}\,|\mathcal{A}(x,u,\nabla u)|^{m'} \, dx \, dt
\\ & \le \int_0^\infty \!\!\int_{\mathbb{R}^N}  u_t \,
u_\tau^{-d} \psi^{k} \, dx \, dt
 + k  \int_0^\infty\!\!\int_{\mathbb{R}^N} \psi^{k-1}u_\tau^{-d} |\mathcal{A}(x,u,\nabla u)| |\nabla\psi| \, dx \, dt.
\end{aligned}\end{equation}
Now consider the first term on the right-hand side  of \eqref{ineq0} and we claim that
\begin{equation} \label{part1}
\int_0^\infty\!\!\int_{\mathbb{R}^N}  u_t \, u_{\tau}^{-d} \, \psi^k \, dx \, dt \leq  \frac{k}{1 - d} \int_0^\infty\!\!\int_{\mathbb{R}^N} \, u_{\tau}^{1 - d}
 \psi^{k - 1}  |\psi_t| \, dx \, dt.
\end{equation}
Indeed, since $(u_{\tau})_t = u_t$, by definition of $u_\tau$, we have
$$
\begin{aligned}
\int_0^\infty\int_{\mathbb{R}^N} u_t \, u_{\tau}^{-d} \psi^k \, dx \, dt &=\int_0^\infty\int_{\mathbb{R}^N} (u_{\tau})_t \; u_{\tau}^{-d} \; \psi^{k} \, dx \, dt = \int_{S}
\frac 1{1- d}\bigl( {u_{\tau}^{1 - d}} \bigr)_t \psi^{k} \, dx \, dt \\
&= \frac{1}{1 - d}  \int_0^\infty\int_{\mathbb{R}^N} \bigg[ (u_{\tau}^{1 - d} \; \psi^k)_t - u_{\tau}^{1 - d}
(\psi^k)_t \bigg] \, dx \, dt  \\
&= - \, \frac{1}{1 - d}\int_{\mathbb R^N} \, u_{\tau}^{1 - d}(x,0) \psi^k(x,0) \, dx -
\frac{k}{1 - d} \int_0^\infty\int_{\mathbb{R}^N} \, u_{\tau}^{1 - d} \; \psi^{k - 1}  \psi_t \, dx \, dt \\
\end{aligned}
$$
where the last equality is due to $\psi \in C^{1}_{c}(\mathbb R^N\times \mathbb [0,\infty))$, hence $ \lim_{t \to \infty} \psi(x,t) = 0.$ Consequently, $\eqref{part1}$ follows immediately from
$u_{\tau}(x,0) =  u_0(x) + \tau > 0$, $\psi\ge0$ and $0<d < 1$.

Further, by Young inequality (see also \cite[Lemma 4.1]{FL}) we have 
$$
u_\tau^{-d} \psi^{k-1} |\mathcal A(x,u,\nabla u)|| \nabla \psi| \leq 
\frac{dc_0}{2 k} u_\tau^{-d-1}\psi^{k}
|\mathcal A(x,u,\nabla u)|^{m'}  + Cu_\tau^{ m-d - 1}\psi^{k - m} |\nabla \psi|^m.
$$
Thanks to property (i) in the definition of $\mathcal S$ and the fact that $u_\tau\ge\tau>0$ and $0\le\psi\le1$, we have
$$ u_\tau^{-d-1}\ \psi^{k}|\mathcal{A}(x,u,\nabla u)|^{m'} 
\in L^1_{loc}(\mathbb{R}^N\times [0,\infty)).$$
Thus, a combination of   \eqref{ineq0} and \eqref{part1} yields
$$
\begin{aligned} 
\int_0^\infty\!\!\int_{\mathbb{R}^N} (K\ast u^p)u^q & u_\tau^{-d} \psi^{k} \, dx \, dt +
\int_0^\infty\!\!\int_{\mathbb R^N}  u_{\tau}^{- d - 1} \psi^{k} |\mathcal{A}(x,u,\nabla u)|^{m'}\, dx \,  dt  \\
&\leq   c_1\int_0^\infty\!\!\int_{\mathbb R^N}  u_{\tau}^{1 - d}\psi^{k - 1}|\psi_t| \, dx  dt + c_2 \int_0^\infty\!\!\int_{\mathbb R^N}  u_\tau^{ m- d- 1}\psi^{k - m} |\nabla \psi|^m  dx \, dt,
\end{aligned}$$
with $c_1,c_2>0$. Since all the exponents of $u$ on the right hand side are positive and \eqref{Summ_with_d}, we let  $\tau\to0$ and apply Fatou and Lebesgue theorems, to obtain \eqref{ineq2.0}. 
\end{proof}

\begin{lemma}\label{lp2}

Let $\ell>1$ and $u\geq 0$ be a solution of \eqref{main} such that $u^\ell\in L^1_{loc}(\mathbb{R}^N\times [0, \infty)).$ 

Define 
\begin{equation}\label{Jdef}
J(t)=\int_{\mathbb R^N}u^\ell (x,t)\psi^k(x, t)dx,
\end{equation}
where $\psi$ is given by \eqref{testfunction}. Then,
\begin{equation}\label{ineqE}\begin{aligned}
\int_0^\infty\!\!\int_{\mathbb R^N} (K\ast u^p)u^q \psi^k \, dx \, dt \le & \; c_1\biggl(\int_0^{2R^\gamma} J(t)\biggr)^{1/\ell} \cdot
R^{\frac{N+\gamma}{\ell'}-\gamma}\\
&\;+c_2\biggl(\int_0^{2R^\gamma} J(t)\biggr)^{\frac{2(m-1)}{m\ell}}\cdot
R^{(N+\gamma)\bigl(1-\frac{2}{m'\ell}\bigr) -1-\frac{\gamma}{m'}}\\
&\;+c_3 \biggl(\int_0^{2R^\gamma} J(t)\biggr)^{\frac{m-1}{\ell}}
\cdot R^{(N+\gamma)\bigl(1-\frac{m-1}{\ell}\bigr)-m},
\end{aligned}
\end{equation}
for some constants $c_1,c_2,c_3>0$.
\end{lemma}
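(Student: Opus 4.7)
The plan is to use $\varphi=\psi^{k}$ as a test function in the weak formulation \eqref{weak_form1} of \eqref{main} and then bound the resulting integrals via H\"older's inequality and Lemma \ref{lp1}. Since $u_0\ge 0$ and $\psi\ge 0$, the choice $\varphi=\psi^{k}$ gives
$$\int_0^\infty\!\!\int_{\mathbb R^N}(K\ast u^p)u^q\psi^k\,dx\,dt\le k\!\int_0^\infty\!\!\int_{\mathbb R^N}u\,\psi^{k-1}|\psi_t|\,dx\,dt+k\!\int_0^\infty\!\!\int_{\mathbb R^N}|\mathcal{A}(x,u,\nabla u)|\,\psi^{k-1}|\nabla\psi|\,dx\,dt.$$
I would estimate the first integral on the right directly by H\"older with exponents $\ell$ and $\ell'$, writing $u\,\psi^{k/\ell}=(u^\ell\psi^k)^{1/\ell}$, and then use \eqref{PSII} to bound the residual $\int\psi^{k-\ell'}|\psi_t|^{\ell'}\le cR^{-\gamma\ell'+N+\gamma}$; this yields Term~1 of \eqref{ineqE}.

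For the second integral, I would pick a small $d\in(0,1)$ and use the decomposition
$$|\mathcal{A}|\,\psi^{k-1}|\nabla\psi|=\bigl[|\mathcal{A}|\,u^{-(d+1)/m'}\psi^{k/m'}\bigr]\cdot\bigl[u^{(d+1)/m'}\psi^{k-1-k/m'}|\nabla\psi|\bigr],$$
together with H\"older's inequality with conjugate exponents $m'$ and $m$, to obtain
$$\int|\mathcal{A}|\psi^{k-1}|\nabla\psi|\le\Bigl(\int|\mathcal{A}|^{m'}u^{-d-1}\psi^k\Bigr)^{\!1/m'}\Bigl(\int u^{(d+1)(m-1)}\psi^{k-m}|\nabla\psi|^m\Bigr)^{\!1/m}.$$
The first factor is then controlled by Lemma \ref{lp1}, after discarding its nonnegative $(K\ast u^p)u^{q-d}$ contribution on the left, which yields $\int|\mathcal{A}|^{m'}u^{-d-1}\psi^k\le c_1\!\int u^{1-d}\psi^{k-1}|\psi_t|+c_2\!\int u^{m-d-1}\psi^{k-m}|\nabla\psi|^m$. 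The elementary inequality $(a+b)^{1/m'}\le a^{1/m'}+b^{1/m'}$ then splits the bound on the $|\mathcal{A}|$-integral into two cross products.

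Finally, each of the integrals $\int u^{1-d}\psi^{k-1}|\psi_t|$, $\int u^{m-d-1}\psi^{k-m}|\nabla\psi|^m$, and $\int u^{(d+1)(m-1)}\psi^{k-m}|\nabla\psi|^m$ is estimated by H\"older against $u^\ell\psi^k$, with the residual test-function integrals bounded via \eqref{PSI} and \eqref{PSII}. Choosing $d$ small enough so that the H\"older exponents $(m-d-1)/\ell$ and $(d+1)(m-1)/\ell$ do not exceed $1$ (which is automatic in the regime $\ell>\max\{1,m-1\}$ relevant for the later application with $\ell=(p+q)/2$), the cross product stemming from $\int u^{1-d}\psi^{k-1}|\psi_t|$ produces Term~2 of \eqref{ineqE}---the characteristic $J$-exponent $2(m-1)/(m\ell)=2/(m'\ell)$ arises as $1/(m'\ell)+(m-1)/(m\ell)$ via $1/m+1/m'=1$---while the other cross product produces Term~3. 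The main obstacle I expect is the meticulous bookkeeping of the H\"older exponents and of the resulting powers of $R$; in particular, verifying the $R$-exponent of Term~2 hinges on the identity $m+m'=mm'$ at the critical step.
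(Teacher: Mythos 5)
Your proposal reproduces the paper's argument step by step: choosing $\varphi=\psi^k$ in \eqref{weak_form1}, splitting the $\mathcal{A}$-term via H\"older with exponents $m',m$ against the weight $u^{-(d+1)}\psi^k$, invoking Lemma~\ref{lp1} (after discarding the nonnegative convolution term) to control the $|\mathcal{A}|^{m'}$ integral, splitting with $(a+b)^{1/m'}\le a^{1/m'}+b^{1/m'}$, and finally estimating each of the three resulting integrals against $u^\ell\psi^k$ by H\"older with exponents that the paper calls $\sigma,\eta,\theta$, then applying \eqref{PSI}--\eqref{PSII}. The bookkeeping you foresee as the main obstacle is exactly the content of the paper's computation of $\frac1{m'\sigma}+\frac1{m\eta}=\frac{2(m-1)}{m\ell}$ and $\frac1{m'\theta}+\frac1{m\eta}=\frac{m-1}{\ell}$ (the $d$-dependence cancels), so your outline is correct and essentially identical to the paper's proof.
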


\begin{proof} We first choose $\varphi=\psi^k$ in the weak formulation \eqref{weak_form1}, with  $\psi$ given by \eqref{testfunction}. Since $u_0, \varphi\ge0$,  we find
\begin{equation}\label{weak_k}
 \int_0^\infty\!\!\int_{\mathbb R^N} (K\ast u^p)u^q \psi^k \, dx \, dt \leq  - \!\int_0^\infty\!\!\int_{\mathbb R^N} u \, (\psi^k)_{t} \, dx \, dt 
 +\!\!  \iint\limits_{{\rm supp}\,(\nabla \psi)}  |\mathcal{A}(x,u,\nabla u)||\nabla\psi^k| dx \, dt.
\end{equation}
From now on, the constant $c_1,\, c_2$ and $c_3$ will assume  different values.

By   H\"older inequality  we obtain
\begin{equation}\label{weak_kk}
\begin{aligned}
\int_0^\infty\!\!\int_{\mathbb R^N} (K\ast u^p)u^q \psi^k \, dx \, dt \leq &  
\, C \biggl( \iint\limits_{{\rm supp}\,  (\psi_t)}  u^\ell\psi^kdx dt\biggr)^{1/\ell}\cdot \biggl(\iint\limits_{{\rm supp}\,  (\psi_t) }  
\psi^{k-\ell'}|\psi_t|^{\ell'}\biggr)^{1/\ell'} \\
& +\!\! \iint\limits_{{\rm supp}(\nabla \psi)} |\mathcal{A}(x,u,\nabla u)||\nabla\psi^k| dx \, dt.
\end{aligned}
\end{equation}
We next estimate the last integral in \eqref{weak_kk}. First, by H\"older's inequality we have
$$
\begin{aligned}
\iint\limits_{{\rm supp}(\nabla \psi)} |\mathcal{A}(x,u,\nabla u)||\nabla\psi^k| dx \, dt\leq & 
k \biggl(\int_0^\infty\!\!\int_{\mathbb R^N}u^{-d-1}\psi^k|\mathcal{A}(x,u,\nabla u)|^{m'} dx\, dt \biggr)^{1/m'}\\
& \cdot
\biggl(\int_0^\infty\!\!\int_{\mathbb R^N}u^{(d+1)(m-1)}\psi^{k-m}|\nabla \psi |^m dx\, dt\biggr)^{1/m}.
\end{aligned}
$$
Next, we use the estimate \eqref{ineq2.0} from Lemma \ref{lp1} and the standard inequality $(a+b)^r\le c(a^r+b^r)$ for $a,b,r>0$. We deduce

\begin{equation}\label{ineq0_Hold}
\begin{aligned}\qquad\quad\int_0^\infty\!\!\int_{\mathbb R^N} |\mathcal{A}(x,u,\nabla u)|&|\nabla\psi^k| dx \, dt\le 
c_1 \biggl(\int_0^\infty\!\!\int_{\mathbb R^N}  u^{1 - d}\psi^{k - 1}|\psi_t| \, dx  dt \biggr)^{1/m'}\\
&\,\,\cdot
\biggl(\int_0^\infty\!\!\int_{\mathbb R^N}u^{(d+1)(m-1)}\psi^{k-m}|\nabla \psi |^m dx\, dt\biggr)^{1/m}
\\&+ c_2\biggl(\int_0^\infty\!\!\int_{\mathbb R^N}  u^{ m- d- 1}\psi^{k - m} |\nabla \psi|^m  dx \, dt\biggr)^{1/m'}\\
&\,\,\cdot
\biggl(\int_0^\infty\!\!\int_{\mathbb R^N}u^{(d+1)(m-1)}\psi^{k-m}|\nabla \psi |^m dx\, dt\biggr)^{1/m}.
\end{aligned}\end{equation}
We now use H\"older inequality in all the factors of the right hand side, so that
\begin{equation}\label{ineqA}
\begin{aligned}
\int_0^\infty\!\!\int_{\mathbb R^N}  u^{1 - d}\psi^{k - 1}|\psi_t| \, dx  dt
&\leq \Big(\iint\limits_{{\rm supp}\,  (\psi_t)}  u^\ell \psi^k\Big)^{1/\sigma}
\Big(\iint\limits_{{\rm supp}\,  (\psi_t)} \psi^{k-\sigma'}|\psi_t|^{\sigma'}\Big)^{1/\sigma'},\\
\end{aligned}
\end{equation}
where
$$\sigma=\frac{\ell}{1-d},\qquad \sigma'=\frac \ell{\ell+d-1};$$
\begin{equation}\label{ineqB}
\begin{aligned}
\int_0^\infty\!\!\int_{\mathbb R^N}u^{(d+1)(m-1)}&\psi^{k-m}|\nabla \psi |^m dx\, dt\\
&\leq \Big( \iint\limits_{{\rm supp}\,  (\nabla \psi)}    u^\ell \psi^k\Big)^{1/\eta}
\Big(\int_0^\infty\!\!\int_{\mathbb R^N}   \psi^{k-m\eta'}|\nabla \psi|^{m\eta'}\Big)^{1/\eta'},
\end{aligned}
\end{equation}
 where
\begin{equation}\label{eta}
\eta=\frac{\ell}{(d+1)(m-1)},\quad \eta'=\frac{\ell}{\ell-m+1-d(m-1)};
\end{equation}
\begin{equation}\label{ineqC}
\begin{aligned}
\int_0^\infty\!\!\int_{\mathbb R^N}  u^{m-1-d} \psi^{k-m}|\nabla \psi |^m
&\leq \Big(\iint\limits_{{\rm supp}\,  (\nabla \psi)}  u^\ell \psi^k\Big)^{1/\theta}
\Big(\int_0^\infty\!\!\int_{\mathbb R^N}   \psi^{k-m\theta'}|\nabla \psi|^{m\theta'}\Big)^{1/\theta'},\\
\end{aligned}
\end{equation}
where
\begin{equation}\label{theta}
\theta=\frac{\ell}{m-d-1},\quad \theta'=\frac{\ell}{\ell-m+d+1}.
\end{equation}
We next replace \eqref{ineqA}, \eqref{ineqB} and \eqref{ineqC} in \eqref{ineq0_Hold}. With $J(t)$ defined in \eqref{Jdef} we obtain

$$
\begin{aligned}&\int_0^\infty\!\!\int_{\mathbb R^N} |\mathcal{A}(x,u,\nabla u)||\nabla\psi^k| dx \, dt\le\\
&\;\; c_1 \biggl(\int_0^\infty J(t)\biggr)^{\frac1{m'\sigma}+\frac1{m\eta}}
\Big(\int_0^\infty\!\!\int_{\mathbb R^N}  \psi^{k-\sigma'}|\psi_t|^{\sigma'}\Big)^{1/m'\sigma'}
\Big(\int_{\mathbb R^N}  \psi^{k-m\eta'}|\nabla \psi|^{m\eta'}\Big)^{1/m\eta'}\\
&+c_2 \biggl(\int_0^\infty J(t)\biggr)^{\frac1{m'\theta}+\frac1{m\eta}}
\Big(\int_0^\infty\!\!\int_{\mathbb R^N}  \psi^{k-m\theta'}|\nabla \psi|^{m\theta'}\Big)^{1/m'\theta'}
\Big(\int_0^\infty\!\!\int_{\mathbb R^N}  \psi^{k-m\eta'}|\nabla \psi|^{m\eta'}\Big)^{1/m\eta'}.
\end{aligned}
$$
Inserting the above inequality in \eqref{weak_kk}  and using \eqref{PSI} and \eqref{PSII} together with the fact that $J(t)=0$ for $t\geq 2R^\gamma$ {from the definition of the test function $\psi$},   we find
\begin{equation}\label{ineqD}
\begin{aligned}
\int_0^\infty\!\!\int_{\mathbb R^N} (K\ast u^p)u^q \psi^k \, dx \, dt \le & \; c_1\biggl(\int_0^{2R^\gamma} J(t)\biggr)^{1/\ell} \cdot
R^{\frac{N+\gamma-\gamma\ell'}{\ell'}}\\
&\;+c_2\biggl(\int_0^{2R^\gamma} J(t)\biggr)^{\frac1{m'\sigma}+\frac1{m\eta}}\cdot
R^{\frac{N+\gamma-\gamma\sigma'}{m'\sigma'}+\frac{N+\gamma-m\eta'}{m\eta'}}\\
&\; +c_3 \biggl(\int_0^{2R^\gamma} J(t)\biggr)^{\frac1{m'\theta}+\frac1{m\eta}}
\cdot R^{\frac{N+\gamma-m\theta'}{m'\theta'}+\frac{N+\gamma-m\eta'}{m\eta'}}\\
= & \;c_1\biggl(\int_0^{2R^\gamma} J(t)\biggr)^{1/\ell} \cdot
R^{\frac{N+\gamma-\gamma\ell'}{\ell'}}\\
&\; +c_2\biggl(\int_0^{2R^\gamma} J(t)\biggr)^{\frac1{m'\sigma}+\frac1{m\eta}}\cdot
R^{(N+\gamma)\bigl (\frac 1{m'\sigma'}+\frac1{m\eta'}\bigr) -1-\frac{\gamma}{m'}}\\
&\;+c_3 \biggl(\int_0^{2R^\gamma} J(t)\biggr)^{\frac1{m'\theta}+\frac1{m\eta}}
\cdot R^{(N+\gamma)\bigl(\frac1{m'\theta'}+\frac1{m\eta'}\bigr)-m}.
\end{aligned}
\end{equation}
Consequently, being
$$\frac 1{m'\sigma }+\frac 1{m\eta }=\frac{2(m-1)}{m\ell},  \qquad \frac 1{m'\theta } +\frac 1{m\eta }=\frac{m-1}{\ell}, $$
$$\frac1{m'\sigma'}+\frac1{m \eta'}=\frac{m\ell-2(m-1)}{m\ell},\qquad  \frac1{m'\theta'}+\frac1{m \eta'}=\frac{\ell-m+1}{\ell}, $$
 using \eqref{ineqD} and the values of the parameters involved, we get the estimate \eqref{ineqE}.
\end{proof}

\begin{lemma}\label{lp3}
Let $u\geq 0$ be a solution of \eqref{main} and $J$ be given by \eqref{Jdef} with $\ell=(p+q)/2>1$. Then
\begin{equation}\label{ineq_final}
\int_0^{2R^\gamma} J(t)^2 dt \le c\bigl(R^{\alpha_1}+R^{\alpha_2}+R^{\alpha_3}\bigr),
\end{equation}
where $c>0$ is a constant and
$$
\begin{aligned}
\alpha_1& =\frac{(N+\beta)(p+q)-2N-\gamma}{p+q-1}\,,\\
\alpha_2&=\frac{(N+\beta-1)m(p+q)-4N(m-1)+\gamma[p+q-2(m-1)]}{m(p+q)-2(m-1)}\,,\\
\alpha_3& =\frac{(N+\beta-m)(p+q)-2N(m-1)+\gamma(p+q-m+1)}{p+q-m+1}.
\end{aligned}
$$
\end{lemma}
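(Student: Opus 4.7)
My plan is to convert the upper estimate \eqref{ineqE} from Lemma \ref{lp2} into a bound on $X:=\int_0^{2R^\gamma} J(t)^2\,dt$ in three steps: first produce a matching lower bound $I\ge CR^{-\beta}X$ for the nonlocal term on the left of \eqref{weak_form2}, then perform Cauchy-Schwarz in time, and finally absorb $X$ via Young's inequality.

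For the lower bound I exploit the symmetry of the convolution together with $0\le\psi^k\le 1$. The latter gives $(K\ast u^p)(x,t)\ge (K\ast u^p\psi^k(\cdot,t))(x,t)$, hence
\[
I:=\int_0^\infty\!\!\int_{\mathbb R^N}(K\ast u^p)u^q\psi^k\,dx\,dt\ge \int_0^\infty\!\!\int_{\mathbb R^N}\!\!\int_{\mathbb R^N} K(x-y)(u^p\psi^k)(y,t)(u^q\psi^k)(x,t)\,dy\,dx\,dt.
\]
Only $x,y\in\overline B_{2R}$ contribute, so $|x-y|\le 4R$, and repeating the splitting used in the proof of Lemma \ref{lp0} (bounding $K$ below by a positive constant on $\{|x-y|\le\rho\}$ and by $c|x-y|^{-\beta}$ on $\{|x-y|>\rho\}$ via \eqref{la}) yields $K(x-y)\ge CR^{-\beta}$ uniformly on this block for $R$ large. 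Separating the $x$ and $y$ integrals and applying Cauchy-Schwarz,
\[
\int u^\ell\psi^k\,dx\le\Big(\int u^p\psi^k\,dx\Big)^{1/2}\Big(\int u^q\psi^k\,dx\Big)^{1/2},\qquad \ell=\tfrac{p+q}{2},
\]
produces $\big(\int u^p\psi^k\big)\big(\int u^q\psi^k\big)\ge J(t)^2$, whence $I\ge CR^{-\beta}X$, the truncation in $t$ being forced by $\text{supp}\,\psi\subset\overline B_{2R}\times[0,2R^\gamma]$.

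Next, a further Cauchy-Schwarz in time gives $\int_0^{2R^\gamma}J(t)\,dt\le (2R^\gamma)^{1/2}X^{1/2}$; combining this with \eqref{ineqE} leads to an inequality of the form
\[
CR^{-\beta}X\le c_1 X^{\lambda_1}R^{E_1+\gamma\lambda_1}+c_2 X^{\lambda_2}R^{E_2+\gamma\lambda_2}+c_3 X^{\lambda_3}R^{E_3+\gamma\lambda_3},
\]
with $\lambda_1=1/(2\ell)$, $\lambda_2=(m-1)/(m\ell)$, $\lambda_3=(m-1)/(2\ell)$, and $E_1,E_2,E_3$ the $R$-exponents appearing in \eqref{ineqE}. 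Condition \eqref{cond1} forces $\ell>\max\{1,m-1\}$, so all three $\lambda_i$ lie in $(0,1)$, and Young's inequality $aX^{\lambda}\le\varepsilon X+C_\varepsilon a^{1/(1-\lambda)}$ applied to each term, with $\varepsilon$ chosen small enough to absorb $\varepsilon X$ on the left, delivers
\[
X\le c\bigl(R^{\alpha_1}+R^{\alpha_2}+R^{\alpha_3}\bigr),\qquad \alpha_i=\frac{\beta+E_i+\gamma\lambda_i}{1-\lambda_i}.
\]

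It remains to match the $\alpha_i$ with the closed forms of \eqref{ineq_final}. This is algebraic bookkeeping: for $\alpha_1$, multiplying numerator and denominator by $2\ell=p+q$ and plugging in $E_1=(N+\gamma)(\ell-1)/\ell-\gamma$ simplifies to $\alpha_1=[(N+\beta)(p+q)-2N-\gamma]/(p+q-1)$, and analogous reductions with the common denominators $m\ell-(m-1)$ and $2\ell-(m-1)=p+q-m+1$ produce the formulas for $\alpha_2$ and $\alpha_3$. The conceptual obstacle lies entirely in the lower bound for $I$: obtaining the clean prefactor $R^{-\beta}$ in front of $J(t)^2$ hinges on symmetrizing the convolution by inserting $\psi^k$ under the kernel (using $\psi^k\le 1$) and on the exponent $\ell=(p+q)/2$ being precisely what makes Cauchy-Schwarz land on $J^2$. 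Everything else is routine, though the expansion for $\alpha_2$ is the bulkiest.
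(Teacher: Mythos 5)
Your proposal is correct and follows essentially the same route as the paper's proof: the lower bound $I\ge CR^{-\beta}\int_0^{2R^\gamma}J(t)^2\,dt$ via the kernel estimate restricted to $B_{2R}\times B_{2R}$ together with Cauchy--Schwarz in $x$ to produce $J^2$, then Cauchy--Schwarz in $t$ to replace $\int J$ by $R^{\gamma/2}(\int J^2)^{1/2}$, followed by Young's inequality to absorb $\int J^2$ and isolate the powers of $R$. The algebraic identity $\alpha_i=(\beta+E_i+\gamma\lambda_i)/(1-\lambda_i)$ you record is a compact and accurate way to organize the bookkeeping that the paper carries out explicitly in its displays (JJ2)--(JJ2\_2).
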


\begin{proof} 
First note that by \eqref{uLp+q_2}, we can choose $\ell=(p+q)/2$ so that the requirement of Lemma \ref{lp2} is satisfied.
Now, observe that supp$\, (\psi^k)\subset B_{2R}(0)\times [0, 2R^\gamma)$.
If $(x,t)\in B_{2R}(0)\times [0, \infty)$, then, in the same way as we estimated \eqref{KKK1} we find 
\begin{equation}\label{KKK2} 
(K\ast u^p)(x,t)
\geq CR^{-\beta}\int_{B_{2R}(0) } u^p(y,t)dy
\geq CR^{-\beta}\int_{\mathbb R^N } u^p(y,t)\psi^k(y,t)dy,
\end{equation}
provided $R>\rho$ is large enough.

Furthermore, for $\ell=(p+q)/2>1$, by H\"older's inequality we have
$$
\begin{aligned}
\biggl(\iint_{\mathbb R^N\times \mathbb R^N}u^p(y,t)\psi^k(y,t) & u^{q}(x,t) \psi^{k}(x,t)  \, dx \, dy\biggr) ^2\\
=\; &\biggl(\iint_{\mathbb R^N\times \mathbb R^N}u^p(y,t)\psi^k(y,t) u^{q}(x,t) \psi^{k}(x,t) \, dx \, dy\biggr)\\
\; &\cdot \biggl(\iint_{\mathbb R^N\times \mathbb R^N}u^p(x,t)\psi^k(x,t) u^{q}(y,t) \psi^{k}(y,t) \, dx \, dy\biggr) \\
\geq \; &
\biggl(\iint_{\mathbb R^N\times \mathbb R^N}u^{\frac{p+q}{2}} (x,t)u^{\frac{p+q}{2}} (y,t)\psi^k(x,t)  \psi^{k}(y,t) \, dx \, dy\biggr)^2\\
=\; & \biggl(\int_{\mathbb R^N}u^{\ell} (x,t)\psi^k(x,t)  \, dx \biggr)^4=J(t)^4,
\end{aligned}
$$
where $J$ is given by \eqref{Jdef} with $\ell=(p+q)/2>1$. Hence,
$$
\iint_{\mathbb R^N\times \mathbb R^N}u^p(y,t)\psi^k(y,t)  u^{q}(x,t) \psi^{k}(x,t)  \, dx \, dy\geq J^2(t).
$$
Hence, using the above estimate and \eqref{KKK2} we deduce
\begin{equation}\label{JJ1}\begin{aligned}
& \int_{\mathbb R^N} (K\ast u^p)u^q \psi^k \, dx \, dt\\
&\qquad\quad\ge C R^{-\beta}\iint_{\mathbb R^N\times \mathbb R^N}u^p(y,t)\psi^k(y,t)  u^{q}(x,t) \psi^{k}(x,t)  \, dx \, dy\\
&\geq  C R^{-\beta} J(t)^2.
\end{aligned}\end{equation}

As observed before inequality \eqref{ineqD}, we have $J(t)=0$ if $t\geq 2R^{\gamma}$.
Thus, \eqref{JJ1} and inequality \eqref{ineqE} in Lemma \ref{lp2} yields
\begin{equation}\label{JJ2}\begin{aligned}
\int_0^{2R^\gamma} J(t)^2 dt &\le c_1\biggl(\int_0^{2R^\gamma} J(t) dt\biggr)^{1/\ell} \cdot
R^{\frac{N+\gamma}{\ell'}-\gamma+\beta}\\
&\quad+c_2\biggl(\int_0^{2R^\gamma} J(t) dt\biggr)^{\frac{2(m-1)}{m\ell}}\cdot
R^{(N+\gamma)\bigl(1-\frac{2}{m'\ell}\bigr) -1-\frac{\gamma}{m'}+\beta}\\
&\quad+c_3 \biggl(\int_0^{2R^\gamma} J(t) dt\biggr)^{\frac{m-1}{\ell}}
\cdot R^{(N+\gamma)\bigl(1-\frac{m-1}{\ell}\bigr)-m+\beta}\\
\end{aligned}
\end{equation} 
In addition, by H\"older's inequality we find
\begin{equation}\label{Holder}
\int_0^{2R^\gamma} J(t) dt\leq \sqrt{2}R^{\gamma/2}\Big(\int_0^{2R^\gamma} J(t)^2 dt\Big)^{1/2},
\end{equation}
so that inequality \eqref{JJ2} implies
 \begin{equation}\label{JJ2_1}\begin{aligned}
\int_0^{2R^\gamma} J(t)^2 dt &\le c_1\biggl(\int_0^{2R^\gamma} J(t)^2 dt\biggr)^{1/2\ell} \cdot
R^{\frac{N+\gamma}{\ell'}-\gamma+\beta+\frac{\gamma}{2\ell}}\\
&\quad+c_2\biggl(\int_0^{2R^\gamma} J(t)^2 dt\biggr)^{\frac{m-1}{m\ell}}\cdot
R^{(N+\gamma)\bigl(1-\frac{2}{m'\ell}\bigr) -1-\frac{\gamma}{m'}+\beta+\frac{\gamma}{m'\ell}}\\
&\quad+c_3 \biggl(\int_0^{2R^\gamma} J(t)^2 dt\biggr)^{\frac{m-1}{2\ell}}
\cdot R^{(N+\gamma)\bigl(1-\frac{m-1}{\ell}\bigr)-m+\beta+\frac{\gamma(m-1)}{2\ell}}.
\end{aligned}
\end{equation}
Now, applying Young inequality we arrive to
\begin{equation}\label{JJ2_2}\begin{aligned}
\int_0^{2R^\gamma} J(t)^2 dt &\le \frac 16\int_0^{2R^\gamma} J(t)^2 dt +c_4
R^{\bigl[\frac{N+\gamma}{\ell'}+\beta-\frac\gamma{(2\ell)'}\bigr](2\ell)'}\\
&\quad+\frac 16\int_0^{2R^\gamma} J(t)^2 dt +c_5
R^{\bigl[(N+\gamma)\bigl(1-\frac{2}{m'\ell}\bigr) -1-\frac{\gamma}{m'\ell'}+\beta\bigr]\bigl(\frac{m\ell}{m-1}\bigr)'}\\
&\quad+\frac 16 \int_0^{2R^\gamma} J(t)^2 dt
+c_6 R^{\bigl[(N+\gamma)\bigl(1-\frac{m-1}{\ell}\bigr)-m+\beta+\frac{\gamma(m-1)}{2\ell}\bigr]\bigl(\frac{2\ell}{m-1}\bigr)'},
\end{aligned}
\end{equation}
which yields \eqref{ineq_final},   being
$$\ell=\frac{p+q}2,\quad \biggl(\frac{m\ell}{m-1}\biggr)'=\frac{m(p+q)}{m(p+q)-2(m-1)}, \quad 
\biggl(\frac{2\ell}{m-1}\biggr)'=\frac{p+q}{p+q-m+1}.
$$
\end{proof}

{\it Proof of Theorem \ref{thmain}.} The proof  will be carried out by taking a specific value of 
$\gamma\geq 1$ in the definition of the cut off functions in \eqref{cutoff}. To this aim, 
we write $\alpha_1$, $\alpha_2$ and $\alpha_3$ so that
$$\alpha_1=\frac{\Upsilon}{p+q-1},\quad \alpha_2=\frac{2(m-1)}{m(p+q)-2(m-1)}\Upsilon, \quad \alpha_3=\frac{m-1}{p+q-m+1}\Upsilon$$
for a certain real constant $\Upsilon$ to be determined.
Since
$$
\begin{aligned}
\alpha_2&=\frac{2(m-1)}{m(p+q)-2(m-1)}\biggl[\frac{N+\beta-1}{2(m-1)}m(p+q)-2N+\gamma\,\frac{p+q-2(m-1)}{2(m-1)}\biggr]\,,\\
\alpha_3 & =\frac{m-1}{p+q-m+1}\biggl[\frac{N+\beta-m}{m-1}(p+q)-2N+\gamma\,\frac{p+q-m+1}{m-1}\biggr]\,,
\end{aligned}
$$
we obtain that
\begin{equation}\label{gamma_iupsilon}\gamma=(m-2)(N+\beta)+m, \quad \Upsilon=(N+\beta)(p+q)-[Nm+\beta(m-2)+m].
\end{equation}
Consequently, for
$$p+q<m-1+\frac{N-\beta+m}{N+\beta}
$$
we obtain $\Upsilon<0$, that is, $\alpha_1,\,\alpha_2, \, \alpha_3<0$. By letting $R\to\infty$ in  \eqref{ineq_final}, it follows that
\begin{equation}\label{esst}
\int_0^{ \infty} J(t)^2 dt=0.
\end{equation}
In turn, using the definition of $J$ in \eqref{Jdef} and the fact that $\psi\equiv1$ in $B_R\times[0, R^\gamma)$, 
this easily yields $u\equiv 0$, a contradiction. 

If
\begin{equation}\label{critical} 
p+q=m-1+\frac{N-\beta+m}{N+\beta},
\end{equation}
then inequality \eqref{ineq_final} for $R\to\infty$ implies $$J\in L^2(0, \infty).$$

Note that 
$$
{\rm supp}\; (\nabla \psi)=\bigl(B_{2R}\setminus B_R\bigr)\times (0, 2R^\gamma)\quad \mbox{ and }\quad  
{\rm supp}\, (\psi_t)=B_{2R}\times (R^\gamma,2R^\gamma).
$$
Define next
$$
\Theta_R=\max\left\{ \int_{R^\gamma}^{2R^\gamma}\biggl(\int_{B_{2R}}u^\ell\psi^kdx\biggr)^2dt\,,
\int_0^{2R^\gamma} \biggl(\int_{B_{2R}\setminus B_R} u^\ell\psi^kdx\biggr)^2dt
\right\}.
$$
Since $J\in L^2(0, \infty)$, it follows that $\Theta_R\to 0$ as $R\to \infty$.
We now retake the estimate \eqref{weak_kk} and all the calculations that follow up to 
\eqref{JJ2_1} to derive
$$
\int_0^{2R^\gamma} J(t)^2 dt \le C\biggl \{\Theta_R^{\frac{1}{2\ell}} +\Theta_R^{\frac{m-1}{m\ell}} + \Theta_R^{\frac{m-1}{\ell}} \biggr\},
$$  
being zero all the exponents of $R$ on the right hand side of \eqref{JJ2_1}, by \eqref{critical} 
and \eqref{gamma_iupsilon} where $\Upsilon=0$.
Now, letting $R\to \infty$ in the above estimate it follows that $J\equiv 0$ and then $u\equiv 0$ which again contradicts our assumption. 
\qed

\noindent{\bf Proof of Corollary \ref{cornu}.} In Lemma  \ref{lp3} we choose the value of $\gamma$ as follows
$$\gamma=\frac{(N-\beta)(m-2)+m(p+q-1)}{p+q-m+1}.$$ 
Then
$$\alpha_1=\alpha_2=\alpha_3:=\alpha=N+\beta-\frac{N-\beta+m}{p+q-m+1}.$$
Starting with \eqref{weak_form1} and going through the same estimates as above without removing the term $\int_{\mathbb R^N}u_0(x)\varphi(x,0)dx$, the inequality
\eqref{ineq_final} changes to
$$\int_0^{2R^\gamma} J(t)^2 dt \le c_1R^{\alpha}-c_2\int_{B_R}u_0(x)dx,$$
where we have used $\psi\equiv1$ in $B_R$.
Thanks to \eqref{u_0_nu} we arrive to 
$$\int_0^{2R^\gamma} J(t)^2 dt \le c\bigl(R^{\alpha}-R^\nu\bigr).$$
Since condition \eqref{critic_nu} forces $\alpha<\nu$, the above estimate yields a contradiction as $R\to\infty$ in both cases $\nu>0$ and $\nu=0$.\qed

\section{Proof of Theorem \ref{thmain2}}

Suppose by contradiction that \eqref{main2} admits a nonnegative nontrivial solution $u\in \mathcal{S}$ such that 
\begin{equation}\label{Kextra}
(K\ast u^p)u^{q+d} \in  L^{1}_{loc}(\mathbb R^N\times [0, \infty)).
\end{equation}
In the same way as in Lemma \ref{lp0} (we only have to replace the exponent $q$ by $q+d$) we deduce
$$
u^{(p+q+d)/2}\in L^1_{loc}(\mathbb{R}^N\times [0,\infty)).
$$
From \eqref{eqd} we have
$$
\frac{p+q+d}{2}>\max\{d+1\, , \, d+m-1\}.
$$
Thus, by H\"older's inequality we deduce
$$
u^{d+m-1}\,,\, u^{d+1}\in L^1_{loc}(\mathbb{R}^N\times [0,\infty)).
$$
This will ensure that all integrals in this section are finite.

We start with the following result which is a counterpart of Lemma \ref{lp1}.

\begin{lemma}\label{lpp1}
 Let $u\in \mathcal{S}$ be a nonnegative solution of \eqref{main2} satisfying \eqref{Kextra} and let $\psi$ be defined by \eqref{testfunction}. Then,
\begin{equation}\label{ineqq2.0}
\begin{aligned} 
\int_0^\infty\int_{\mathbb{R}^N} & (K\ast u^p)u^{q+d} \psi^{k} \, dx \, dt+
\int_0^\infty\!\!\int_{\mathbb R^N}  u^{d - 1} \psi^{k} |\mathcal{A}(x,u,\nabla u)|^{m'}\, dx \,  dt  \\
&\leq   c_1\int_0^\infty\!\!\int_{\mathbb R^N}  u^{d+1}\psi^{k - 1}|\psi_t| \, dx  dt + 
c_2 \int_0^\infty\!\!\int_{\mathbb R^N}  u^{ d+m- 1}\psi^{k - m} |\nabla \psi|^m  dx \, dt,
\end{aligned}\end{equation}
for some constants $c_1,c_2>0$ and $d>0$ satisfies \eqref{eqd}.  
\end{lemma}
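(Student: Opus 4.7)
\medskip

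\noindent\textbf{Proof plan for Lemma \ref{lpp1}.} The plan is to mirror the proof of Lemma \ref{lp1}, but with three modifications dictated by the coercive sign in \eqref{weak_form3}: a test function with \emph{positive} power of $u_\tau$, a sign rearrangement that still deposits the $|\mathcal{A}|^{m'}$ contribution on the left, and a different bookkeeping in Young's inequality. Concretely, fix $\tau > 0$, let $\{\xi_\varepsilon\}$ be a standard mollifier and set $\tilde{u}_\varepsilon(x,t)=\tau + (\xi_\varepsilon \ast u)(x,t)$, $u_\tau = \tau + u$, both of which are bounded below by $\tau > 0$. Insert $\varphi=\tilde{u}_\varepsilon^{d}\psi^{k}$ (with $k$ large and $\psi$ as in \eqref{testfunction}) into the weak formulation \eqref{weak_form3}; since $d>0$ the negative exponents that caused trouble in Lemma \ref{lp1} do not appear at this stage.

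Expanding $\nabla\varphi = d\tilde{u}_\varepsilon^{d-1}\psi^{k}\nabla\tilde{u}_\varepsilon + k\tilde{u}_\varepsilon^{d}\psi^{k-1}\nabla\psi$, the divergence term in \eqref{weak_form3} contributes
\[
-d\!\iint \tilde{u}_\varepsilon^{d-1}\psi^{k}\mathcal{A}\cdot\nabla\tilde{u}_\varepsilon\,dx\,dt - k\!\iint \tilde{u}_\varepsilon^{d}\psi^{k-1}\mathcal{A}\cdot\nabla\psi\,dx\,dt.
\]
I would move the first piece to the left-hand side. Letting $\varepsilon\to 0^+$ (so $\tilde{u}_\varepsilon\to u_\tau$ and $\nabla\tilde{u}_\varepsilon\to\nabla u$ in the appropriate local senses) and using the weak $m$-coercivity \eqref{weak_m} turns that term into $dc_0\iint u_\tau^{d-1}\psi^{k}|\mathcal{A}(x,u,\nabla u)|^{m'}\,dx\,dt$, which is exactly the contribution we want on the left of \eqref{ineqq2.0}.

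Next I would handle the $u_t$ term exactly as in \eqref{part1}: since $(u_\tau)_t = u_t$,
\[
\iint u_t\,u_\tau^{d}\psi^{k}\,dx\,dt = \frac{1}{d+1}\iint (u_\tau^{d+1})_t\,\psi^{k}\,dx\,dt = -\frac{1}{d+1}\int u_\tau^{d+1}(x,0)\psi^{k}(x,0)\,dx - \frac{k}{d+1}\iint u_\tau^{d+1}\psi^{k-1}\psi_t\,dx\,dt,
\]
so the initial contribution is $\le 0$ and can be discarded, leaving a bound by $\frac{k}{d+1}\iint u_\tau^{d+1}\psi^{k-1}|\psi_t|$. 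For the remaining boundary term $k\iint u_\tau^{d}\psi^{k-1}|\mathcal{A}||\nabla\psi|$, I split the integrand as $[u_\tau^{(d-1)/m'}\psi^{k/m'}|\mathcal{A}|]\cdot[u_\tau^{(d+m-1)/m}\psi^{(k-m)/m}|\nabla\psi|]$ and apply Young's inequality with conjugate exponents $m',m$. Choosing the constant so the $|\mathcal{A}|^{m'}$ contribution becomes $\tfrac{dc_0}{2}u_\tau^{d-1}\psi^{k}|\mathcal{A}|^{m'}$, I absorb it into the left-hand side and the residual is precisely $C\,u_\tau^{d+m-1}\psi^{k-m}|\nabla\psi|^{m}$.

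Finally, I would send $\tau\to 0^+$. On the right-hand side dominated convergence applies because $u_\tau^{d+1}\le (u+1)^{d+1}$ and $u_\tau^{d+m-1}\le(u+1)^{d+m-1}$ are both in $L^1_{\mathrm{loc}}$ thanks to the improved integrability derived just before the lemma (which is itself an echo of Lemma \ref{lp0} with $q$ replaced by $q+d$, using hypothesis \eqref{Kextra}). On the left-hand side I would apply Fatou's lemma: the convolution term converges monotonically to $\iint(K\ast u^{p})u^{q+d}\psi^{k}$, and the $|\mathcal{A}|^{m'}$ term is handled by Fatou regardless of whether $d-1$ is positive or negative (monotone convergence in each case, since $u_\tau^{d-1}$ is monotone in $\tau$). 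The inequality \eqref{ineqq2.0} follows, with constants $c_1=k/(d+1)$ and $c_2>0$ coming from Young's step.

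The main technical point to be vigilant about is the convergence of $\iint u_\tau^{d-1}\psi^{k}|\mathcal{A}|^{m'}\,dx\,dt$ when $d<1$: here $u_\tau^{d-1}$ can blow up on the zero set of $u$, but Fatou still delivers the correct lower bound, and the absorption argument only requires that the term be finite \emph{after} subtracting half of itself from a finite right-hand side, so this does not obstruct the estimate.
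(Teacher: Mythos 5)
Your proposal is correct and matches the paper's approach: the authors state that the proof ``follows line by line that of Lemma \ref{lp1} in which we replace $d$ by $-d$,'' and your argument is precisely that substitution carried out explicitly (test function $\tilde u_\varepsilon^{d}\psi^k$, the $u_t$ integration by parts producing the nonpositive initial term, the Young split $u_\tau^{d}\psi^{k-1}|\mathcal{A}||\nabla\psi|=[u_\tau^{(d-1)/m'}\psi^{k/m'}|\mathcal{A}|]\cdot[u_\tau^{(d+m-1)/m}\psi^{(k-m)/m}|\nabla\psi|]$ with conjugate exponents $(m',m)$, absorption into the left-hand side, and passage $\tau\to 0^+$ via Fatou/dominated convergence using the integrability supplied by \eqref{Kextra}). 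Your extra care about the $d<1$ versus $d\ge1$ dichotomy when letting $\tau\to 0$ in the $u_\tau^{d-1}|\mathcal{A}|^{m'}$ term is a reasonable and useful addition, not a departure from the paper's method.
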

The proof follows line by line  that of Lemma \ref{lp1} in which we replace $d$ by $-d$. 

Similar to the estimate \eqref{JJ1} we find
\begin{equation}\label{L1}\begin{aligned}
\int_{\mathbb R^N} (K\ast u^p)u^{q+d} \psi^k \, dx \, dt
\ge R^{-\beta} L(t)^2,
\end{aligned}\end{equation}
where
\begin{equation}\label{Ldef}
L(t)=\int_{\mathbb R^N}u^\tau (x,t)\psi^k(x, t)dx,
\end{equation}
where $\tau=(p+q+d)/2>1$ and $\psi$ is defined by \eqref{testfunction}. In the same way as we deduced \eqref{ineqA}, by H\"older's inequality we find

\begin{equation}\label{ineq2A}
\begin{aligned}
\int_0^\infty\!\!\int_{\mathbb R^N}  u^{d+1}\psi^{k - 1}|\psi_t| \, dx  dt
&\leq \Big(\int_0^\infty\!\!\int_{\mathbb R^N}   u^\tau \psi^k\Big)^{1/\sigma}
\Big(\int_0^\infty\!\!\int_{\mathbb R^N} \psi^{k-\sigma'}|\psi_t|^{\sigma'}\Big)^{1/\sigma'}\\[0.05in]
&= \Big(\int_0^{2R^\gamma}L(t) dt\Big)^{1/\sigma}
\Big(\int_0^\infty\!\!\int_{\mathbb R^N} \psi^{k-\sigma'}|\psi_t|^{\sigma'}\Big)^{1/\sigma'}\\[0.05in]
&\leq c R^{\gamma/(2\sigma)} \Big(\int_0^{2R^\gamma}L(t)^2 dt\Big)^{1/(2\sigma)}
\Big(\int_0^\infty\!\!\int_{\mathbb R^N} \psi^{k-\sigma'}|\psi_t|^{\sigma'}\Big)^{1/\sigma'}\\[0.05in]
&\leq C \Big(\int_0^{2R^\gamma}L(t)^2 dt\Big)^{1/(2\sigma)}R^{\frac{2N+\gamma}{2\sigma'}-\frac{\gamma}{2}},\\
\end{aligned}
\end{equation}
where $c>0$ is a constant and 
$$\sigma=\frac{\tau}{d+1},\qquad \sigma'=\frac \tau{\tau-d-1}.$$
Note that since $\Gamma(d)>2$ we have $\sigma>1$. 

By H\"older's inequality, similarly  to the estimate \eqref{ineqC}, we have
\begin{equation}\label{ineq2C}
\begin{aligned}
\int_0^\infty\!\!\int_{\mathbb R^N}  u^{d+m-1} & \psi^{k-m}|\nabla \psi |^m
\leq \Big(\int_0^\infty\!\!\int_{\mathbb R^N}   u^\tau \psi^k\Big)^{1/\theta}
\Big(\int_0^\infty\!\!\int_{\mathbb R^N}   \psi^{k-m\theta'}|\nabla \psi|^{m\theta'}\Big)^{1/\theta'}\\[0.05in]
&= \Big(\int_0^{2R^\gamma}L(t) dt\Big)^{1/\theta}
\Big(\int_0^\infty\!\!\int_{\mathbb R^N}   \psi^{k-m\theta'}|\nabla \psi|^{m\theta'}\Big)^{1/\theta'}\\[0.05in]
&\leq cR^{\gamma/(2\theta)} \Big(\int_0^{2R^\gamma}L(t)^2 dt\Big)^{1/(2\theta)}
\Big(\int_0^\infty\!\!\int_{\mathbb R^N}   \psi^{k-m\theta'}|\nabla \psi|^{m\theta'}\Big)^{1/\theta'}\\[0.05in]
&\leq C \Big(\int_0^{2R^\gamma}L(t)^2 dt\Big)^{1/(2\theta)}
R^{\frac{2N+\gamma}{2\theta'}-m+\frac{\gamma}{2}},
\\
\end{aligned}
\end{equation}
 where
\begin{equation}\label{eta2}
\theta=\frac{\tau}{d+m-1},\quad \theta'=\frac{\tau}{\tau-d-m+1},
\end{equation}
Note that from $\Gamma(d)>2$ we derive $\eta>1$. 
Take now $\gamma=m$.

Next, we use \eqref{L1}, \eqref{ineq2A} and \eqref{ineq2C} in the estimate \eqref{ineqq2.0} of Lemma \ref{lpp1} to deduce
\begin{equation}\label{la2}
\int_0^{2R^\gamma}L(t)^2 dt \leq c\Big(\int_0^{2R^\gamma}L(t)^2 dt\Big)^{\frac{1}{2\sigma}}R^{\alpha_1}+c\Big(\int_0^{2R^\gamma}L(t)^2 dt\Big)^{\frac{1}{2\theta}}R^{\alpha_2},
\end{equation}
where, by $\eqref{eqd}$ and $\gamma=m$, we have
$$
\begin{aligned}
\alpha_1&=\frac{2N+m}{2\sigma'}-\frac{m-2\beta}{2}=\frac{(N+\beta)(p+q+d)-(2N+m)(d+1)}{p+q+d}<0,\\
\alpha_2&=\frac{2N+m}{2\theta'}-\frac{m-2\beta}{2}=\frac{(N+\beta)(p+q+d)-(2N+m)(d+m-1)}{p+q+d}<0.
\end{aligned}
$$

We further apply Young's inequality in the right hand-side of \eqref{la2} and obtain
$$
\int_0^{2R^\gamma}L(t)^2 dt \leq \frac{1}{4}\int_0^{2R^\gamma}L(t)^2 dt+C R^{\alpha_1 (2\sigma)'}+\frac{1}{4}\int_0^{2R^\gamma}L(t)^2 dt +C R^{\alpha_2 (2\theta)' },
$$
that is,
$$
\int_0^{2R^\gamma}L(t)^2 dt \leq 2C\Big( R^{\alpha_1 (2\sigma)'}+ R^{\alpha_2 (2\theta)' }\Big)\longrightarrow  0\quad\mbox{ as }R\to\infty.
$$
Proceeding as in the proof of Theorem \ref{thmain} this yields $L(t)\equiv0$ in $[0,\infty)$, which gives 
$u\equiv 0$, contradiction.
\qed

\section*{Acknowledgments} RF is a member of the {\em Gruppo Nazionale per
l'Analisi Ma\-te\-ma\-ti\-ca, la Probabilit\`a e le loro Applicazioni}
(GNAMPA) of the {\em Istituto Nazionale di Alta Matematica} (INdAM) 
and she was partly supported by  {\em Fondo Ricerca di
Base di Ateneo Esercizio} 2017-19 of the University of Perugia, named
{\em Problemi con non linearit\`a dipendenti dal gradiente}
and by INdAM-GNAMPA Project 2020 titled
{\em Equazioni alle derivate parziali : problemi e modelli}
(Prot\_U-UFMBAZ-2020).

\end{document}